\newtheorem{thm}{Theorem}[section]
\newtheorem{lem}[thm]{Lemma}
\newtheorem{cor}[thm]{Corollary}
\newtheorem{prop}[thm]{Proposition}
\theoremstyle{definition}
\newtheorem{defn}[thm]{Definition}
\newtheorem{ex}[thm]{Example}
\newtheorem{exs}[thm]{Examples}
\newtheorem{rem}[thm]{Remark}
\numberwithin{equation}{section}
\newcommand{\thmref}[1]{Theorem~\textup{\ref{#1}}}
\newcommand{\lemref}[1]{Lemma~\textup{\ref{#1}}}
\newcommand{\propref}[1]{Proposition~\textup{\ref{#1}}}
\newcommand{\exref}[1]{Example~\textup{\ref{#1}}}
\newcommand{\midtext}[1]{\quad\text{#1}\quad}
\newcommand{\righttext}[1]{\quad\text{#1 }}
\renewcommand{\and}{\midtext{and}}
\renewcommand{\for}{\righttext{for}}
\newcommand{\all}{\righttext{for all}}
\newcommand{\N}{\mathbb N}
\newcommand{\Z}{\mathbb Z}
\newcommand{\C}{\mathbb C}
\newcommand{\T}{\mathbb T}
\newcommand{\KK}{\mathcal K}
\newcommand{\LL}{\mathcal L}
\newcommand{\OO}{\mathcal O}
\newcommand{\NO}{\mathcal{NO}}
\renewcommand{\a}{\alpha}
\renewcommand{\b}{\beta}
\renewcommand{\O}{\Omega}
\newcommand{\Th}{\Theta}
\renewcommand{\L}{\Lambda}
\DeclareMathOperator{\obj}{Obj}
\DeclareMathOperator{\mor}{Mor}
\DeclareMathOperator{\supp}{supp}
\DeclareMathOperator{\End}{End}
\DeclareMathOperator*{\spn}{span}
\newcommand{\<}{\langle}
\renewcommand{\>}{\rangle}
\newcommand{\inv}{^{-1}}
\renewcommand{\emph}{\textit}
\newcommand{\csps}{cor\-re\-spond\-en\-ces}
\newcommand{\csp}{cor\-re\-spond\-en\-ce}
\begin{document}

\title[$C^*$-algebras arising from topological dynamical systems]{The crossed-product structure of $C^*$-algebras arising from topological dynamical systems}
\author[Farthing, Patani, and Willis]{Cynthia Farthing, Nura Patani, and Paulette N. Willis}
\address [Cynthia Farthing]{Department of Mathematics, Creighton University, Omaha, Nebraska 68106}\email{CynthiaFarthing@creighton.edu}
\address [Nura Patani]{School of Mathematical and Statistical Sciences, Arizona State University, Tempe, Arizona 85287} \email{nura.patani@asu.edu}
\address [Paulette N. Willis]{Department of Mathematics, University of Houston, Houston, Texas 77204}\email{pnwillis@math.uh.edu}

\thanks{The third author was supported by the NSF Mathematical Sciences Postdoctoral Fellowship DMS-1004675, the University of Iowa Graduate College Fellowship as part of the Sloan Foundation Graduate Scholarship Program, and the University of Iowa Department of Mathematics NSF VIGRE grant DMS-0602242.}

\date{\today}

\begin{abstract}
We show that every topological $k$-graph constructed from a locally compact Hausdorff space $\Omega$ and a family of pairwise commuting local homeomorphisms on $\Omega$ satisfying a uniform boundedness condition on the cardinalities of inverse images may be realized as a semigroup crossed product in the sense of Larsen.
\end{abstract}

\subjclass[2010]{Primary 46L05, Secondary 46L55, 37B10}
\keywords {Crossed product, topological higher-rank graph, product system of $C^*$-correspondences, Cuntz-Pimsner algebra}

\maketitle

%-----Introduction
\section{Introduction}

In \cite{cun:internal}, Cuntz constructed the crossed product of a $C^*$-algebra $A$ by an endomorphism $\alpha$ as a corner in an ordinary group crossed product.  Since that time, there have been many efforts (see \cite{pas, stacey}, for example) to develop a theory of crossed products of $C^*$-algebras by single endomorphisms as well as by semigroups of endomorphism.  In \cite{ex_nlend}, Exel proposed a new definition for the crossed product of $A$ by $\alpha$ that depends not only on the pair $(A,\alpha)$ but also on the choice of a \emph{transfer operator} (i.e., a positive continuous linear map $L:A\to A$ satisfying $L(\alpha(a)b)=aL(b)$).  Exel shows that the Cuntz-Krieger algebra of a given $\{0,1\}$-matrix may be realized as the crossed product arising from the associated Markov sub-shift and a naturally defined transfer operator.

Extending Exel's construction to non-unital $C^*$-algebras, Brownlowe, Raeburn, and Vittadello in \cite{brv} model directed graph $C^*$-algebras as crossed products.  In particular, they show that if $E$ is a locally finite directed graph with no sources, then $C^*(E)\cong C_0(E^\infty)\rtimes_{\alpha,L}\N$ where $E^\infty$ is the infinite-path space of $E$ and $\alpha$ is the shift map on $E^\infty$.

In another extension of Exel's construction, Larsen (in \cite{larsen:crossed}) develops a theory of crossed products associated to dynamical systems $(A,S,\alpha,L)$ where $A$ is a (not necessarily unital) $C^*$-algebra, $S$ is an abelian semigroup with identity, $\alpha$ is an action of $S$ by endomorphisms, $L$ is an action of $S$ by transfer operators, and for all $s\in S$, the maps $\alpha_s,L_s$ are extendible to $M(A)$ in an appropriate sense.

Given a locally compact Hausdorff space $\O$ and a family $\{T_i\}_{i=1}^k$ of pairwise-commuting local homeomorphisms of $\O$, Yeend \cite{yeend07} described the construction of an associated topological $k$-graph $(\L(\O,\{T_i\}_{i=1}^k),d)$.  Motivated in part by the ideas in \cite{brv} described above, we show that if $\L=(\L(\O,\{T_i\}_{i=1}^k),d)$ is the topological $k$-graph constructed from the data {$(\O,\{T_i\}_{i=1}^k)$}, then $C^*(\L)$ has a crossed product structure in the sense of Larsen in \cite{larsen:crossed}.  

Given a general topological $k$-graph, it is not always the case than an associated graph $C^*$-algebra may be constructed.  We show that $\Lambda$ is compactly aligned, a condition that ensures $C^*(\Lambda)$ exists. This generalizes a result of Willis in \cite{willis} in which she essentially shows that the result holds when $k=2$, $\O$ is compact, and the maps $T_1,T_2$ $*$-commute.  In \cite{clsv}, the authors show that when $\L$ is a compactly aligned topological $k$-graph, the $C^*$-algebra $C^*(\L)$ constructed from the boundary path groupoid is isomorphic to the Cuntz-Pimsner algebra $\mathcal{NO}_{X^\L}$ where $X^\L$ is the topological $k$-graph correspondence associated to $\L$.  We show that the product system $X^{Lar}$ associated to the dynamical system (in the sense of Larsen) arising from the data $(\O,\{T_i\}_{i=1}^k)$ is isomorphic to the topological $k$-graph correspondence $X^\L$ so that the associated Cuntz-Pimsner algebras are isomorphic.

To show that $\mathcal{NO}_X$ is isomorphic to the Larsen crossed product, we show that certain notions of covariance agree for representations of the product systems $X^\Lambda$ and $X^{Lar}$.  Our isomorphism result then follows from the universal properties of the associated $C^*$-algebras.

Brownlowe has shown in \cite{brownlowe} that the $C^*$-algebra of a finitely-aligned discrete $k$-graph has a crossed product structure.  He has suggested that the Cuntz-Nica-Pimsner algebra $\NO_X$ should be used to define a general crossed product by a quasi-lattice ordered semigroup of partial endomorphisms and partially-defined transfer operators. The fact that, in our setting, $\mathcal{NO}_X$ is isomorphic to the Larsen crossed product supports his proposal.

The paper is organized as follows:  We begin with some preliminaries in Section~\ref{prelim}.  We state some necessary definitions about product systems of $C^*$-\csps, various notions of Cuntz-Pimsner covariance appearing in the literature, and the $C^*$-algebras that are universal for such representations.  We review several definitions about the topological $k$-graphs and the dynamical systems described by Larsen in \cite{larsen:crossed}, as well as the $C^*$-algebras associated to each of these constructions.

In Section~\ref{sec:TDS}, we define what we mean by topological dynamical system and describe a uniform boundedness condition that is key to our results.  We describe Yeend's construction of a topological $k$-graph from a topological dynamical system and show that this topological $k$-graph is always compactly aligned (in the sense of \cite[Definition 2.3]{yeend07}).  We then show how an Exel-Larsen system may be associated to a topological dynamical system satisfying our uniform boundedness condition.

In Section~\ref{sec:XLar}, we define two product systems over $\N^k$ of $C^*$-cor\-res\-pond\-ences: the topological $k$-graph correspondence $X^\Lambda$ and the product system $X^{Lar}$ associated to the Exel-Larsen system, and show that the two product systems are isomorphic. Finally, in Section~\ref{sec:algebras}, we show that the $C^*$-algebras associated to the topological $k$-graph and the Exel-Larsen system arising from a given topological dynamical system are isomorphic.

%-----Preliminaries section
\section{Preliminaries}\label{prelim}

%-----Product systems subsection of preliminaries
\subsection{Product systems of $C^*$-\csps}
In this subsection, we give some key definitions for product systems of $C^*$-\csps, many of which
may be found in \cite[Section 2]{sy}.  For more details on right-Hilbert $C^*$-modules and $C^*$-
\csps, we refer the reader to \cite{lan:hilbert,tfb}.

Given a $C^*$-algebra $A$ and a countable semigroup $S$ with identity $e$, a
\emph{product system over $S$ of $A$-\csps} is a semigroup $X$ equipped with a semigroup
homomorphism $p:X\to S$ such that $X_s:=p\inv(s)$ is an $A$-\csp~ for each $s\in S$, $X_e=A$
(viewed as an $A$-\csp), the multiplication in $X$ implements isomorphisms $\b_{s,t}:X_s
\otimes_A X_t\to X_{st}$ for $s,t\in S\setminus\{0\}$, and multiplication in $X$ by elements of $X_e=A$ induces maps $\b_
{s,e}:X_s\otimes_A X_e\to X_s$ and $\b_{e,s}:X_e\otimes_A X_s\to X_s$.  For each $s\in S$, $\b_
{s,e}$ is an isomorphism by \cite[Corollary 2.7]{tfb}.

For each $s\in S$ and $\xi,\eta\in X_s$, the operator $\Th_{\xi,\eta}:X_s\to X_s$ defined by $\Th_
{\xi,\eta}(\zeta):=\xi\cdot\<\eta,\zeta\>_A$ is adjointable with $\Th_{\xi,\eta}^*=\Th_{\eta,\xi}$.  The
space $\KK(X_s):=\overline{\spn}\{\Th_{\xi,\eta}:\xi,\eta\in X_s\}$ is a closed two-sided ideal in $\LL
(X_s)$ which we call the \emph{generalized compact operators on $X_s$}.

Given $s,t\in S$ with $s\neq e$, we have a homomorphism $\iota_s^{st}:\LL(X_s)\to \LL(X_{st})$
characterized by
	\[
	\iota_s^{st}(T)(\xi\eta)=T(\xi)\eta \all \xi\in X_s,\eta\in X_t, T\in \LL(X_s).
	\]
Via the identification of $\KK(X_e)$ with $A$, there is also a homomorphism $\iota_e^s:\KK(X_e)\to
\LL(X_s)$ given by $\iota_e^s=\phi_s$, where $\phi_s$ is the homomorphism of $A$ to $\LL(X_s)$
implementing the left action.

%-----Representations of product systems subsection of preliminaries
\subsection{Representations of product systems and associated $C^*$-algebras}
Given a product system $X$ over $S$ of $A$-\csps, a \emph{(Toeplitz) representation} of $X$ in a
$C^*$-algebra $B$ is a map $\psi:X\to B$ such that
\begin{enumerate}
	\item For each $s\in S$, the pair $(\psi_s,\psi_e):=(\psi|_{X_s},\psi|_{X_e})$ is a Toeplitz
representation of $X_s$ in the sense that $\psi_s:X_s\to B$ is linear and $\psi_e:A\to B$ is a
homomorphism satisfying
	\begin{align*}
		\psi_s(\xi\cdot a) &=\psi_s(\xi)\psi_e(a) \\
		\psi_s(\xi)^*\psi(\eta) &=\psi_e(\<\xi,\eta\>_{X_s}) \\
		\psi_s(a\cdot\xi) &=\psi_e(a)\psi_s(\xi)
	\end{align*}
	for $\xi, \eta\in X_s$, $a\in A$, and
	\item $\psi(\xi\eta)=\psi(\xi)\psi(\eta)$, for $\xi,\eta\in X$.
\end{enumerate}
For each $s\in S$, there is a homomorphism $\psi^{(s)}:\KK(X_s)\to B$ satisfying
\[
\psi^{(s)}(\Th_{\xi,\eta})=\psi_s(\xi)\psi_s(\eta)^* \for \xi,\eta\in X_s.
\]
We say that a representation $\psi:X\to B$ is \emph{Cuntz-Pimsner covariant} if for each $s\in S$
the (Toeplitz) representation $(\psi_s,\psi_e)$ is Cuntz-Pimsner covariant, that is
\begin{equation}\label{eqn:CP-K}
\psi^{(s)}(\phi_s(a))=\psi_e(a) \for a\in \phi_s^{-1}(\KK(X_s))\cap (\ker \phi_s)^\perp \tag{CP-K},
\end{equation}
where $\phi_s:A\to \LL(X_s)$ is the homomorphism giving the left action of $A$ on $X_s$.

\begin{rem}
Different definitions exist in the literature for Cuntz-Pimsner covariant representations.  The one
used above is sometimes referred to as the ``Katsura convention'' and differs from the definition in
\cite{fowmuhrae} where \eqref{eqn:CP-K} is instead required to hold for $a\in \phi^{-1}(\KK(X))$.  The two
definitions coincide when the left action on each fibre is injective.
\end{rem}

\begin{defn}\label{def:OX}
For a product system $X$ the \emph{Cuntz-Pimsner algebra $\OO_X$} is the universal $C^*$-algebra generated by a
representation
	$
	j^{Fow}:X\to \OO_X
	$
	that satisfies \eqref{eqn:CP-K}.
\end{defn}

\noindent A (Toeplitz) representation $\psi:X\to B$ is said to be \emph{coisometric on $K=\{K_s\}_{s
\in S}$}, where each $K_s$ is an ideal in $\phi_s^{-1}(\KK(X_s))$,  if each $(\psi_s,\psi_e)$ is
coisometric on $K_s$; that is,
	\begin{equation}\label{eqn:coisom}
	\psi^{(s)}(\phi_s(a))=\psi_e(a), \textrm{ for all }a\in K_s.
	\end{equation}

\begin{defn}\label{def:relCPalg}
The \emph{relative Cuntz-Pimsner algebra $\OO(X,K)$} is the universal $C^*$-algebra generated
by a representation
	$
	j^{relCP}:X\to\OO(X,K)
	$
	that is coisometric on $K=\{K_s\}_{s\in S}$.
\end{defn}

A quasi-lattice ordered group $(G,P)$ is a discrete group $G$ and a subsemigroup $P$ such that:
$P\cap P\inv=\{e\}$, and any two elements $p,q\in G$ that have a common upper bound in $P$ have a least upper bound
$p\vee q \in P$ under the order $p\leq q\Longleftrightarrow p\inv q\in P$.

\begin{defn}\label{def:ca}
Given a quasi-lattice ordered group and a product system $X$ over $P$ of $A$-\csps, we say that $X$ is \emph{compactly aligned} if
whenever $p\vee q<\infty$, the map $\iota_p^{p\vee q}(S)\iota_q^{p\vee q}(T)\in \KK(X_{p\vee q})$ for all $S\in\KK(X_p), T\in\KK(X_q)$.
	
\end{defn}
A (Toeplitz) representation $\psi:X\to B$ is \emph{Nica covariant} if, for each $p,q\in P$ and for all
$S\in\KK(X_p), \ T\in\KK(X_q)$, we have
\begin{equation}\label{eqn:N}
\psi^{(p)}(S)\psi^{(q)}(T)=\left\{\begin{array}{ll} \psi^{(p\vee q)}(\iota_p^{(p\vee q)}(S)\iota_q^{(p\vee
q)}(T)), & \textrm{ if }p\vee q<\infty \\ 0 & \textrm{ otherwise.} \end{array}\right. \tag{N}
\end{equation}

In \cite{sy}, Sims and Yeend introduced a new notion of Cuntz-Pimsner covariance for compactly aligned product systems.  In order to define their notion of Cuntz-Pimsner
covariance, we need to consider the space $\widetilde{X}$ which serves as a sort of ``boundary'' of
$X$ (see \cite[Remark 3.10]{sy}).

 Given a quasi-lattice ordered group $(G,P)$ and a product system $X$ over $P$ of $A$-\csps,
define $I_e=A$ and for $p\in P\setminus\{e\}$ define $I_p=\bigcap_{e<r\leq p}\ker(\phi_r)$.  Note
that $I_p$ is an ideal of $A$.  For $q\in P$, define
 \[
 \widetilde{X}_q=\bigoplus_{p\leq q} X_p\cdot I_{p^{-1}q}.
 \]
 Each $\widetilde{X}_q$ is an $A$-\csp~with left action implemented by $\widetilde{\phi}_q:A\to \LL
(\widetilde{X}_q)$ where $(\widetilde{\phi}_q(a)\xi)(p) = \phi_p(a)\xi(p)$, for $p\leq q$.
 There is a homomorphism $\widetilde{\iota}_p^q:\LL(X_p)\to \LL(\widetilde{X}_q)$   defined by
 \[
 \left(\widetilde{\iota}_p^q(S)\xi\right)(r)=\iota_p^r(S)\xi(r).
 \]

Let $(G,P)$ be a quasi-lattice ordered group and let $X$ be a compactly aligned product
system over $P$ of $A$-\csps~such that $\widetilde{\phi}_q$ is injective for each $q\in P$.  A
(Toeplitz) representation $\psi:X\to B$ of $X$ in a $C^*$-algebra $B$ is said to be \emph{Cuntz-Pimsner covariant} if
\begin{align} \label{eqn:CP-SY}
	\textrm{for every finite }F\subset P,\textrm{ and every choice }\{T_p\in \KK(X_p):p\in F\} \tag{CP-SY} \\
	\textrm{such that } \sum_{p\in F}\widetilde{\iota}_p^s(T_p)=0  \textrm{ for large $s$,  we have }
\sum_{p\in F}\psi^{(p)}(T_p)=0_B.\notag
\end{align}
See \cite[Definition 3.8]{sy} for the definition of \emph{for large $s$}.  If $\psi:X\to B$ satisfies both
\eqref{eqn:CP-SY} and \eqref{eqn:N}, then $\psi$ is said to be CNP-covariant or \emph{Cuntz-Nica-Pimsner covariant}.

\begin{defn}\label{def:NOX}
The \emph{Cuntz-Nica-Pimsner algebra $\NO_X$} is the universal $C^*$-algebra generated by a
CNP-covariant representation $j^{CNP}:X\to\NO_X.$
\end{defn}

%------Topological k-graphs subsection of preliminaries
\subsection{Topological $k$-graphs and their $C^*$-algebras}

For $k\in\N$, a \emph{topological $k$-graph} is a pair $(\L,d)$ consisting of: (1) a
small category $\L$ endowed with a second countable locally compact Hausdorff topology under
which composition is continuous and open, the range map $r$ is continuous, and the source map
$s$ is a local homeomorphism; and (2) a continuous functor $d:\L\to\N^k$, called the \emph
{degree map}, satisfying the \emph{factorization property}: if $\lambda\in\L$ with $d
(\lambda)=m+n$, then there are unique $\mu,\nu\in\L$ with $d(\mu)=m$, $d(\nu)=n$, and $
\lambda=\mu\nu$.  For $m\in\N^k$, let $\L^m$ denote the paths of degree $m$.  We identify $\L^0$ with the vertex space $\obj(\L)$.  For more details about topological $k$-graphs, see \cite{yeend07}.

Given a compactly aligned topological $k$-graph $\L$, the \emph{topological $k$-graph $C^*$-algebra} $C^*(\L)$ is the full groupoid $C^*$-algebra $C^*(\mathcal{G}_\L)$ of the boundary path groupoid $\mathcal{G}_\L$ defined in \cite[Definition 4.1]{y_gpoid}.  It is shown in \cite[Theorem 5.20]{clsv} that $C^*(\L)$ is isomorphic to the Cuntz-Nica-Pimsner algebra $\NO_{X^\L}$ associated to the topological $k$-graph cor\-re\-spond\-ence $X^\L$ (for details of the construction of $X^\L$, see \cite{sy} for example), where $\NO_{X^\L}$ is the universal $C^*$-algebra generated by a CNP-covariant representation $j^{CNP}:X^\L\to\NO_{X^\L}$.

\subsection{Exel-Larsen systems and their relative Cuntz-Pimsner algebras}
Let $A$ be a (not necessarily unital) $C^*$-algebra, $S$ an abelian semigroup with
identity $e$.  Let $\a:S\to \End(A)$ be an action such that each $\a_s$ is \emph{extendible,} meaning that it extends uniquely to an endomorphism $\overline{\a_s}$ of $M(A)$ such that
	\begin{equation}\label{eq:alphaext}
	\overline{\a}_s(1_{M(A)})=\lim \a_s(u_\lambda)
	\end{equation}
	for some (and hence every) approximate unit $(u_\lambda)$ in $A$ and all $s\in S$.  Finally,
let $L$ be an action of $S$ by continuous, linear, positive maps $L_s:A\to A$ which have linear
continuous extensions $\overline{L}_s:M(A)\to M(A)$ satisfying the \emph{transfer operator identity}
	\begin{equation}\label{eqn:transferop}
	L_s(\a_s(a)u)=a\overline{L}_s(u), \all a\in A, u\in M(A), s\in S.
	\end{equation}
	We call the quadruple $(A,S,\a,L)$ an \emph{Exel-Larsen system}.

Given an Exel-Larsen system $(A,S,\a,L)$ there is an associated product system over
$S$ of $A$-\csps~which we will denote $X^{Lar}$ (for details of the general construction, see \cite
{larsen:crossed}). The \emph{Larsen crossed product} $A\rtimes_{\a,L}S$ is the relative Cuntz-Pimsner algebra of $X^{Lar}$ and $K=\{K_s\}_{s\in S}$ where
	\begin{equation}\label{eqn:coisomsets}
		K_s=\overline{A\a_s(A)A}\cap\phi_s^{-1}(\KK(X^{Lar}_s)).
	\end{equation}
We denote by $j^{Lar}$ the universal representation of $X^{Lar}$ that generates $A\rtimes_{\a,L}
\N^k$.

%-----TDS and associated constructions section
\section{The topological dynamical system $(\O,\{T_i\}_{i=1}^k)$ and associated constructions}\label{sec:TDS}

%  topological dynamical system definition
\begin{defn}\label{TDS}
A \emph{topological dynamical system (TDS)} is a pair $(\O,\{T_i\}_{i=1}^k)$ consisting of a locally
compact Hausdorff space $\O$ and pairwise commuting local homeomorphisms $T_1,\ldots,T_k:\O
\to\O$.
For each $m=(m_1,\ldots,m_k)\in\N^k$ the map $\Th_m:\O\to\O$ defined by
	\[
	\Th_m(x)=T_1^{m_1}\cdots T_k^{m_k}(x).
	\]
is a local homeomorphism.
\end{defn}

\begin{defn}\label{def:unifboundinvimage}
Let $X$ and $Y$ be sets.  A function $f:X\to Y$ has \emph{uniformly bounded cardinality on inverse images} if there exists $N\in\N$ such that $\sup_{y\in Y}\left|\{x\in X:f(x)=y\}\right|\leq N$.   The number $N$ is called the uniform bound on the cardinality of the inverse image of $f$.

We say that a topological dynamical system $(\O, \{T_i\}_{i=1}^k)$ \emph{satisfies condition (UBC)} if each $T_i$, $1\leq i\leq k$, has uniformly bounded cardinalities on inverse images.
\end{defn}

If $(\O, \{T_i\}_{i=1}^k)$ satisfies condition (UBC), then for each $m\in\N^k$, the local homeomorphism $\Theta_m$ also has uniformly bounded cardinality on inverse images.

%  some examples
\begin{exs}\label{exs:tds}
\begin{enumerate}\renewcommand{\theenumi}{\ref*{exs:tds} (\roman{enumi})}

	\item\label{ex:nfold1} % n-fold cover of the circle
	Let $\T$ denote the unit circle and fix $n_0\in\N$.  Define $T:\T\to\T$ by $z\mapsto z^{n_0}$.  Then $
(\T,T)$ is a TDS and for $m\in \N$, the local homeomorphism $\Th_m$ is
given by $z\mapsto z^{n_0^m}$.  The system $(\T,T)$ has satisfies condition (UBC) since $$\sup_{y\in\T} |\{z\in \T:T(z)=z^{n_0}=y\}|\leq n_0.$$

	\item\label{ex:nfoldk} % k n-fold covers of the circle
	Given any $n_1,\ldots,n_k\in\N$ we may define $T_i:\T\to\T$ by $z\mapsto z^{n_i}$ to obtain
$k$ pairwise commuting local homeomorphisms.  Then $(\T,\{T_i\}_{i=1}^k)$ is a topological
dynamical system and for each $m\in\N^k$, the local homeomorphism $\Th_m$ is given by $z
\mapsto z^{n_1^{m_1}+\cdots+n_k^{m_k}}$.  The system $(\T,\{T_i\}_{i=1}^k)$ satisfies condition (UBC) since $$\sup_{y\in\T} |\{z\in \T:T_i(z)=z^{n_i}=y\}|\leq n_i.$$

	\item\label{ex:fullshift} % full shift space example
	Let $A$ be a finite alphabet and for $n\in\N$, let $A^n$ denote the space of words of length
$n$.  We let $A^\N$ denote the one-sided infinite sequence space, which is compact by
Tychonoff's Theorem.  The \emph{shift map} $\sigma:A^\N\to A^\N$ defined by
	\[
	\sigma(x_1x_2x_3\cdots)=x_2x_3\cdots
	\]
	is a local homeomorphism of $A^\N$.  Given a \emph{block map} $d:A^n\to A$ for some $n\in
\N$, we may define a \emph{sliding block code} $\tau_d:A^\N\to A^\N$ via
	\[
	\tau_d(x)_i=d(x_i\cdots x_{i+n-1}).
	\]
A function $\phi:A^\N\to A^\N$ is continuous and commutes with the shift map $\sigma$ if and only if $\phi= \tau_d$ is the sliding block code associated to some block map $d$ (see \cite[Lemma 3.3.3 and Lemma 3.3.7]{willis} for a proof, or \cite[Theorem 3.4]{Hed69} for an earlier proof in the two-sided setting).  Exel and Renault prove in \cite[Theorem 14.3]{exren} that $\tau_d$
is a local homeomorphism whenever $d$ is \emph{progressive} (also called \emph{right permutive}) in the sense that for each $x_1\cdots x_{n-1}\in A^{n-1}$ the function $a\mapsto d(x_1\cdots x_{n-1}a)$ is bijective. It follows that if $\tau_d$ is a sliding block code associated to a progressive block map, then $(A^\N,\{\sigma,\tau_d\})$ is a TDS and for $(a,b)\in\N^2$ the local homeomorphism $\Th_{(a,b)}$ is given by
	\[
	\Th_{(a,b)}(x)=\sigma^a\tau_d^b(x).
	\]
A block map $d$ is said to be regressive (also called \emph{left permutive}) if for each $x_1\cdots x_{n-1}\in A^{n-1}$ the function $a\mapsto d(ax_1\cdots x_{n-1})$ is bijective.  In \cite[Theorems 6.6 and 6.7]{Hed69}, Hedlund shows that if $\tau_d$ is a sliding block code associated to a block map that is both progressive and regressive, then $\tau_d$ is $|A|^{n-1}$-to-1 and surjective.  Therefore the system $(A^\N,\{\sigma,\tau_d\})$ satisfies condition (UBC) whenever the block map $d$ is progressive and regressive.

	\item\label{ex:kgraphshift} % shifts on the boundary path space of a locally-finite k-graph with no sources
	Let $\L$ be a row-finite $k$-graph with no sources such that for each $i=1,\ldots,k$, $$|\Lambda^{e_i}v|<\infty, \all v\in\L^0.$$  Since $\L$ is row-finite with no sources, the
boundary path space $\partial\L$ coincides with the infinite path space $\L^\infty$ (see \cite
[Examples 5.13, 1.]{fmy}).  For each $i=1,\ldots,k$, let $T_i:\partial\L\to\partial\L$ denote the shift by
$e_i$, that is,
	\[
	T_i(x)(n)=x(n+e_i) \for n\in\N^k.
	\]
	Then $(\partial\L,\{T_i\}_{i=1}^k)$ is a topological dynamical system and for $m\in\N^k$, the
local homeomorphism $\Th_m$ is given by $\Th_m(x)(n)=x(n+m)$ for $n\in\N^k$.  It is straightforward to see that the condition $|s_{e_i}\inv(v)|<\infty$ for each $i=1,\ldots,k$ and every $v\in\L^0$ ensures the system $(\partial\L,\{T_i\}_{i=1}^k)$ satisfies condition (UBC).
	
\end{enumerate}
\end{exs}

%-------subsection describing associated topological k-graph
\subsection{The topological $k$-graph associated to $(\O,\{T_i\}_{i=1}^k)$}\label{subsec:lambda}

%------specific construction of our topological $k$-graph
Given a topological dynamical system $(\O,\{T_i\}_{i=1}^k)$ with local homeomorphisms $\Th_m$
as defined above, we construct a topological $k$-graph $\L=(\L(\O,\{T_i\}_{i=1}^k),d)$ as in \cite[Example 2.5
(iv)]{yeend07}.  Specifically, we have \begin{itemize}
	\item $\obj(\L)=\O$
	\item $\mor(\L)=\N^k\times\O$, with the product topology
	\item $r(n,x)=x$ and $s(n,x)=\Th_n(x)$
	\item Composition is given by $$(n,x)\circ(m,\Th_n(x))=(n+m,x)$$
	\item The degree map is defined by $d(n,x)=n$.
\end{itemize}

\begin{exs}\label{exs:graph}
\begin{enumerate}\renewcommand{\theenumi}{\ref*{exs:graph} (\roman{enumi})}
	\item \label{ex:nfold1:graph} % n-fold cover of the circle
	Fix $n_0\in \N$. Let $(\T,T)$ be the topological dynamical system described in \exref{ex:nfold1}.
The associated topological 1-graph is visualized below.
	\begin{center}
	\begin{tikzpicture}
		\draw [very thick] (-5,0) -- (5,0);
		\node [very thick] (e) at (-5,0) {$($};
		\node [very thick] (f) at (5,0) {$]$};
		\node [very thick] (g) at (-5.5,0) {$\mathbb{T}$};
		\node (a) at (-4,-0.5) {$z^{n_0^{(k+l)}}$};
		\node (b) at (4,-0.5) {$z$};
		\draw [thick] (-4,0) .. controls (-2,2) and (2,2) .. node[sloped, above] {$(k+l,z)$} node {$>$}
(4,0);
		\node (c) at (0,-0.5) {$z^{n_0^k}$};
		\draw [thick] (0,0) .. controls (1.5,1) and (2.5,1) .. node[sloped,below] {$(k,z)$} node {$>$}
(4,0);
		\draw [thick]  (-4,0) .. controls (-2.5,1) and (-1.5,1) .. node[sloped,below] {$(l,z^{n_0^k})$}
node {$>$} (0,0);
	\end{tikzpicture}
	\end{center}
	\item \label{ex:nfoldk:graph} % 2 n-fold covers of the circle
	For $n_1,n_2\in\N$, we obtain a topological dynamical system $(\T,\{T_1,T_2\})$ as in \exref
{ex:nfoldk} where $T_i:\T\to\T$ is given by $T_i(z)=z^{n_i}$.  The 1-skeleton of the associated 2-
graph is visualized below.
	\begin{center}
	\begin{tikzpicture}
		\draw [very thick] (-5,0) -- (5,0);
		\node [very thick] (e) at (-5,0) {$($};
		\node [very thick] (f) at (5,0) {$]$};
		\node [very thick] (g) at (-5.5,0) {$\mathbb{T}$};
		\node (a) at (-3,-0.5) {$z^{n_2}$};
		\node (b) at (4,-0.5) {$z$};
		\node (c) at (0,-0.5) {$z^{n_1}$};
		\draw [thick] (0,0) .. controls (1.5,1) and (2.5,1) .. node[sloped,below] {$((1,0),z)$} node
{$>$} (4,0);
		\draw [thick, dashed]  (-3,0) .. controls (-0.5,2) and (2,2) .. node[sloped,below] {$((0,1),z)
$} node {$>$} (4,0);
	\end{tikzpicture}
	\end{center}
	\item \label{ex:kgraphshift:graph}
	Let $\L=\O_k$.  This is a locally-finite $k$-graph with no sources such that $|s_{e_i}\inv(v)|<\infty$  for each
$i=1,\ldots,k$ and every $\lambda$.  For $k=2$, the 1-skeleton of $\L$ is shown below:
	\begin{center}
	\begin{tikzpicture}[scale=1]
		%\O_2
		%vertices
		 \node[inner sep=.5pt, circle, fill=black] (vert00) at (0,0) [draw] {};
		 \node[inner sep=.5pt, circle, fill=black] (vert10) at (1,0) [draw] {};
		 \node[inner sep=.5pt, circle, fill=black] (vert20) at (2,0) [draw] {};
		 \node[inner sep=.5pt, circle, color=white] (vert30) at (2.5,0) [draw] {};
		
		 \node[inner sep=.5pt, circle, fill=black] (vert01) at (0,1) [draw] {};
		 \node[inner sep=.5pt, circle, fill=black] (vert11) at (1,1) [draw] {};
		 \node[inner sep=.5pt, circle, fill=black] (vert21) at (2,1) [draw] {};
		 \node[inner sep=.5pt, circle, color=white] (vert31) at (2.5,1) [draw] {};
		
		 \node[inner sep=.5pt, circle, fill=black] (vert02) at (0,2) [draw] {};
		 \node[inner sep=.5pt, circle, fill=black] (vert12) at (1,2) [draw] {};
		 \node[inner sep=.5pt, circle, fill=black] (vert22) at (2,2) [draw] {};
		 \node[inner sep=.5pt, circle, color=white] (vert32) at (2.5,2) [draw] {};
		
		 \node[inner sep=.5pt, circle, color=white] (vert03) at (0,2.5) [draw] {};
		 \node[inner sep=.5pt, circle, color=white] (vert13) at (1,2.5) [draw] {};
		 \node[inner sep=.5pt, circle, color=white] (vert23) at (2,2.5) [draw] {};
		 \node[inner sep=.5pt, circle, color=white] (vert33) at (2.5,2.5) [draw] {};
		
		%horizontal edges
		 \draw[style=thick, -latex] (vert12.west)--(vert02.east);
		 \draw[style=thick, -latex] (vert11.west)--(vert01.east);
		 \draw[style=thick, -latex] (vert10.west)--(vert00.east);
		 \draw[style=thick, -latex] (vert22.west)--(vert12.east);
		 \draw[style=thick, -latex] (vert21.west)--(vert11.east);
		 \draw[style=thick, -latex] (vert20.west)--(vert10.east);
		 \draw[style=loosely dotted, very thick, -] (vert32.west)--(vert22.east);
		 \draw[style=loosely dotted, very thick, -] (vert31.west)--(vert21.east);
		 \draw[style=loosely dotted, very thick, -] (vert30.west)--(vert20.east);
		
		 %vertical edges
		 \draw[style=loosely dotted, very thick, -] (vert03.south)--(vert02.north);
		 \draw[style=loosely dotted, very thick, -] (vert13.south)--(vert12.north);
		 \draw[style=loosely dotted, very thick, -] (vert23.south)--(vert22.north);
		 \draw[style=dashed, -latex] (vert02.south)--(vert01.north);
		 \draw[style=dashed, -latex] (vert12.south)--(vert11.north);
		 \draw[style=dashed, -latex] (vert22.south)--(vert21.north);
		 \draw[style=dashed, -latex] (vert01.south)--(vert00.north);
		 \draw[style=dashed, -latex] (vert11.south)--(vert10.north);
		 \draw[style=dashed, -latex] (vert21.south)--(vert20.north);
		  
	\end{tikzpicture}
	\end{center}

Let $(\partial\L,\{T_i\}^k)$ be the topological dynamical system as in \exref{ex:kgraphshift}.
To visualize the associated topological $k$-graph $\Gamma$, note that, since $\L=\O_k$, each $x
\in \L^\infty$ is uniquely determined by a point $p=r(x)=x(0)\in\obj(\L)$.  So we may regard $\L^\infty
$ as $\N^k$.  Modifying notation to reflect this gives
\begin{itemize}
	\item $\obj(\Gamma)=\N^k$
	\item $\mor(\Gamma)=\N^k\times\N^k$
	\item $r(m,p)=p$ and $s(m,p)=p+m$
	\item $d(m,p)=m$
	\item $(m,p)(n,p+m)=(m+n,p)$
\end{itemize}
The 1-skeleton of $\Gamma$ is
	\begin{center}
	\begin{tikzpicture}[scale=1]
		%\O_2
		%vertices
		 \node[inner sep=.5pt, circle, fill=black] (vert00) at (0,0) [draw] {};
		 \node[inner sep=.5pt, circle, fill=black] (vert10) at (1,0) [draw] {};
		 \node[inner sep=.5pt, circle, fill=black] (vert20) at (2,0) [draw] {};
		 \node[inner sep=.5pt, circle, color=white] (vert30) at (2.5,0) [draw] {};
		
		 \node[inner sep=.5pt, circle, fill=black] (vert01) at (0,1) [draw] {};
		 \node[inner sep=.5pt, circle, fill=black] (vert11) at (1,1) [draw] {};
		 \node[inner sep=.5pt, circle, fill=black] (vert21) at (2,1) [draw] {};
		 \node[inner sep=.5pt, circle, color=white] (vert31) at (2.5,1) [draw] {};
		
		 \node[inner sep=.5pt, circle, fill=black] (vert02) at (0,2) [draw] {};
		 \node[inner sep=.5pt, circle, fill=black] (vert12) at (1,2) [draw] {};
		 \node[inner sep=.5pt, circle, fill=black] (vert22) at (2,2) [draw] {};
		 \node[inner sep=.5pt, circle, color=white] (vert32) at (2.5,2) [draw] {};
		
		 \node[inner sep=.5pt, circle, color=white] (vert03) at (0,2.5) [draw] {};
		 \node[inner sep=.5pt, circle, color=white] (vert13) at (1,2.5) [draw] {};
		 \node[inner sep=.5pt, circle, color=white] (vert23) at (2,2.5) [draw] {};
		 \node[inner sep=.5pt, circle, color=white] (vert33) at (2.5,2.5) [draw] {};
		
		%horizontal edges
		 \draw[style=thick, -latex] (vert12.west)--(vert02.east);
		 \draw[style=thick, -latex] (vert11.west)--(vert01.east);
		 \draw[style=thick, -latex] (vert10.west)--(vert00.east);
		 \draw[style=thick, -latex] (vert22.west)--(vert12.east);
		 \draw[style=thick, -latex] (vert21.west)--(vert11.east);
		 \draw[style=thick, -latex] (vert20.west)--(vert10.east);
		 \draw[style=loosely dotted, very thick, -] (vert32.west)--(vert22.east);
		 \draw[style=loosely dotted, very thick, -] (vert31.west)--(vert21.east);
		 \draw[style=loosely dotted, very thick, -] (vert30.west)--(vert20.east);
		
		 %vertical edges
		 \draw[style=loosely dotted, very thick,  -] (vert03.south)--(vert02.north);
		 \draw[style=loosely dotted, very thick,  -] (vert13.south)--(vert12.north);
		 \draw[style=loosely dotted, very thick,  -] (vert23.south)--(vert22.north);
		 \draw[style=dashed,  -latex] (vert02.south)--(vert01.north);
		 \draw[style=dashed,  -latex] (vert12.south)--(vert11.north);
		 \draw[style=dashed,  -latex] (vert22.south)--(vert21.north);
		 \draw[style=dashed,  -latex] (vert01.south)--(vert00.north);
		 \draw[style=dashed,  -latex] (vert11.south)--(vert10.north);
		 \draw[style=dashed,  -latex] (vert21.south)--(vert20.north);
	\end{tikzpicture}
	\end{center}
\end{enumerate}
\end{exs}

Given a general topological dynamical system $(\O,\{T_i\}_{i=1}^k)$, it is important to verify that we
may in fact construct the topological $k$-graph $C^*$-algebra $C^*(\L)$ associated to the
topological $k$-graph $\L=(\L(\O,\{T_i\}_{i=1}^k),d)$.  In order to establish this, we begin by showing that $\L$ is proper.  We then prove that every proper topological $k$-graph is compactly aligned.

%------definition of proper for a top. k-graph
\begin{defn}\label{def:proper}
	A topological $k$-graph $\L$ is said to be \emph{proper} if for all $m\in\N^k$, the map $r|_
{\L^m}$ is a proper map.  That is, if for every $m\in\N^k$ and compact $U\subset\L^0$, the set $U
\L^m$ is compact.
\end{defn}

%------definition of compactly aligned for a topological k-graph
\begin{defn}\label{def:ca}
	A topological $k$-graph $\L$ is said to be \emph{compactly aligned} if for all $p,q\in \N^k$
and for all compact $U\subset \L^p$ and $V\subset\L^q$, the set $U\vee V:=U\L^{(p\vee q)-p}\cap V\L^{(p\vee q)-q} \subset\L^{p\vee q}$ is
compact.
\end{defn}
This compactly aligned condition ensures that the boundary path groupoid $G_\L$ is a locally
compact $r$-discrete groupoid admitting a Haar system and hence that the associated $C^*$-
algebra $C^*(\L)$ may be defined.

%------result that our top. k-graph is proper
\begin{lem}\label{lem:lambdaisproper}
	Let $(\O, \{T_i\}^k_{i=1})$ be a topological dynamical system.  Then the associated topological
$k$-graph $\L=(\L(\O,\{T_i\}_{i=1}^k),d)$ is proper.
\end{lem}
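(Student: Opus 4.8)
The plan is to unwind the definitions, since properness is essentially built into Yeend's construction of $\L=\L(\O,\{T_i\}_{i=1}^k)$ described in \subsecref{subsec:lambda}. Fix $m\in\N^k$. By the definition of the degree map $d(n,x)=n$, we have $\L^m=d\inv(m)=\{m\}\times\O$ as a subset of $\mor(\L)=\N^k\times\O$. Since $\N^k$ carries the discrete topology, the subspace topology that $\{m\}\times\O$ inherits from the product topology is just a homeomorphic copy of the topology on $\O$. Under the identification $\L^m\cong\O$ via $(m,x)\mapsto x$, the range map $r|_{\L^m}\colon\L^m\to\L^0=\O$ is precisely this homeomorphism (recall $r(m,x)=x$); in particular $r|_{\L^m}$ is a homeomorphism onto $\O$.

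Now let $U\subseteq\L^0=\O$ be compact. Then
\[
U\L^m=\{\lambda\in\L^m:r(\lambda)\in U\}=(r|_{\L^m})\inv(U)=\{m\}\times U,
\]
which is homeomorphic to $U$ and hence compact in $\N^k\times\O$. As $m\in\N^k$ and the compact set $U\subseteq\L^0$ were arbitrary, $r|_{\L^m}$ is proper for every $m$, so $\L$ is proper in the sense of \defnref{def:proper}.

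The only point requiring any care is the bookkeeping in identifying $\L^m$ with $\{m\}\times\O$ and in checking that its subspace topology coincides with that of $\O$; both are immediate from the explicit description of $\L$ together with the fact that $\N^k$ is discrete. There is no analytic content here: the morphism space of $\L$ is literally a disjoint union of copies of $\O$ indexed by $\N^k$, and on each copy the range map acts as the identity, so preimages of compact sets under $r|_{\L^m}$ are automatically compact. This properness is exactly what is needed for the next step, where one shows that every proper topological $k$-graph is compactly aligned, and hence that $C^*(\L)$ is defined.
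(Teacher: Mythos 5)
Your proof is correct and follows essentially the same route as the paper's: both hinge on the identification $U\L^m=\{m\}\times U$ and conclude compactness from that of $U$. The only difference is cosmetic — you invoke the homeomorphism $(m,x)\mapsto x$ directly, while the paper runs an explicit open-cover argument to the same effect.
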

\begin{proof}
	Fix $m\in\N^k$ and let $U\subset \L^0$ be compact.  We want to show that $U\L^m$ is
compact.  Let $\mathcal{C}=\{C_i\}_{i\in I}$ be an open cover of $U\L^m$.  Note that $U\L^m=\{m\}
\times U$ so for each $i$, we have $C_i=\{m\}\times B_i$ for some open $B_i\subseteq U$.  Then $
\mathcal{B}=\{B_i\}_{i\in I}$ is an open cover of $U$.
	
	Since $U$ is compact, there is a finite subset $J$ of $I$ such that $U\subseteq\bigcup_{i\in J}
B_i$.  Then
	\[
	U\L^m=\{m\}\times U\subseteq\{m\}\times\bigcup_{i\in J} B_i=\bigcup_{i\in J}\left(\{m\}\times B_i
\right)=\bigcup_{i\in J}C_i
	\]
	Then $\mathcal{C}'=\{C_i\}_{i\in J}$ is a finite subset of $\mathcal{C}$ that covers $U\L^m$.  It
follows that $U\L^m$ is compact and hence $\L$ is proper.
\end{proof}

To show that $\L=(\L(\O,\{T_i\}_{i=1}^k),d)$ is compactly aligned, we use the following lemma which is stated
without proof in \cite[Remark 6.5]{yeend07}.
%----------result that every proper top. k-graph is compactly aligned
\begin{lem}\label{lem:properiscompactlyaligned}
	Every proper topological $k$-graph is compactly aligned.
\end{lem}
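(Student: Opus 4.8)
The plan is to deduce compact alignment from two applications of properness together with the Hausdorff axiom. Fix $p,q\in\N^k$ and compact sets $U\subseteq\Lambda^p$ and $V\subseteq\Lambda^q$; write $\ell=p\vee q$, $n=\ell-p$ and $n'=\ell-q$, so that $U\vee V=U\Lambda^{n}\cap V\Lambda^{n'}$ sits inside $\Lambda^{\ell}$. Since the topology on $\Lambda$ is Hausdorff, so is the subspace $\Lambda^{\ell}$; hence compact subsets of $\Lambda^{\ell}$ are closed, and the intersection of two compact subsets of $\Lambda^{\ell}$ is again compact. It therefore suffices to show that $U\Lambda^{n}$ and $V\Lambda^{n'}$ are each compact, and by symmetry I will only treat $U\Lambda^{n}$.

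First I would note that $s$ is a local homeomorphism, hence continuous, so $s(U)$ is a compact subset of $\Lambda^{0}$. Applying properness of $\Lambda$ to the degree $n$ and the compact set $s(U)$ shows that $K:=s(U)\Lambda^{n}=(r|_{\Lambda^{n}})^{-1}(s(U))$ is compact. I would then observe that $U\Lambda^{n}$ is exactly the image, under the composition map $(\mu,\nu)\mapsto\mu\nu$, of the set $Z:=\{(\mu,\nu)\in U\times K : s(\mu)=r(\nu)\}$: any $\mu\nu\in U\Lambda^{n}$ has $\mu\in U$, $\nu\in\Lambda^{n}$ and $r(\nu)=s(\mu)\in s(U)$, so $\nu\in K$ and $(\mu,\nu)\in Z$; conversely every $(\mu,\nu)\in Z$ is composable and $\mu\nu\in U\Lambda^{n}$.

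It remains to see that $Z$ is compact. The product $U\times K$ is compact, and $Z$ is closed in it because $Z$ is the preimage of the diagonal of $\Lambda^{0}\times\Lambda^{0}$ under the continuous map $(\mu,\nu)\mapsto(s(\mu),r(\nu))$, and that diagonal is closed as $\Lambda^{0}$ is Hausdorff. Thus $Z$ is a closed subset of a compact set, hence compact, and since composition is continuous its image $U\Lambda^{n}$ is compact. The identical argument yields compactness of $V\Lambda^{n'}$, and then $U\vee V$ is compact by the reduction in the first paragraph, which proves the lemma.

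I do not anticipate a real obstacle here. The only points that need care are: invoking the Hausdorff property twice --- once so that the diagonal of $\Lambda^{0}\times\Lambda^{0}$ is closed, and once so that $U\Lambda^{n}$ and $V\Lambda^{n'}$ are closed in $\Lambda^{\ell}$ and hence meet in a compact set --- and replacing the possibly non-compact fibre $\Lambda^{n}$ by the compact set $K=s(U)\Lambda^{n}$ produced by properness before forming the fibre product $Z$, so that $Z$ is genuinely compact rather than merely closed.
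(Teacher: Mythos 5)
Your proposal is correct and follows essentially the same route as the paper's proof: take $s(U)$ compact, apply properness to get $s(U)\Lambda^{(p\vee q)-p}$ compact, push the fibre product with $U$ through the continuous composition map to get $U\Lambda^{(p\vee q)-p}$ compact, and intersect. You in fact supply two details the paper leaves implicit --- that the set of composable pairs is closed in the compact product (via the Hausdorff diagonal), and that Hausdorffness is what makes the final intersection of compacts compact --- so your write-up is, if anything, more complete.
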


\begin{proof}
	Let $p,q\in\N^k$ and let $U\subset\L^p$ and $V\subseteq\L^q$ be compact.  Since $s$ is
continuous, $s(U)$ and $s(V)$ are both compact.  By the assumption that $\Lambda$ is proper, it follows that $s(U)\Lambda^{(p\vee q)-p}$ and $s(V)\Lambda^{(p\vee q)-q}$ are both compact.  Moreover, since $\L*\L$ has the relative topology inherited from the product topology on $\L\times\L$, the sets $U*s(U)\L^{(p\vee q)-p}$ and $V*s(V)\L^{(p\vee q)-q}$ are compact.  Since the composition map is continuous, it follows that theimages of these sets under the composition map, namely $U\L^{(p\vee q)-p}$ and $V\L^{(p\vee q)-q}$, are compact. Hence
	\[
	U\vee V=U\L^{(p\vee q)-p}\cap V\L^{(p\vee q)-q}
	\]
	is compact and therefore $\L$ is compactly aligned.
\end{proof}

%-------result that our top. k-graph is compactly aligned.
\begin{prop}\label{prop:lambdaca}
	Let $(\O, \{T_i\}^k_{i=1})$ be a topological dynamical system.  Then the associated topological
$k$-graph $\L=(\L(\O,\{T_i\}_{i=1}^k),d)$ is compactly aligned.
\end{prop}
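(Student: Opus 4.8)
The plan is to chain the two preceding lemmas. By \lemref{lem:lambdaisproper} the topological $k$-graph $\L=(\L(\O,\{T_i\}_{i=1}^k),d)$ associated to the topological dynamical system $(\O,\{T_i\}_{i=1}^k)$ is proper, and by \lemref{lem:properiscompactlyaligned} every proper topological $k$-graph is compactly aligned; combining these gives exactly the assertion that $\L$ is compactly aligned. So the proof reduces to a single-line deduction, and there is no genuine obstacle left once the two lemmas are in hand — the substantive work has already been carried out in establishing them.

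For a reader who wants the mechanism spelled out, I would briefly recall the content of each lemma. The properness in \lemref{lem:lambdaisproper} comes from the fact that $\mor(\L)=\N^k\times\O$ carries the product topology, so $\L^m=\{m\}\times\O$ and $U\L^m=\{m\}\times U$ is homeomorphic to a compact set $U\subseteq\L^0=\O$; in particular no hypothesis on the $T_i$ beyond continuity is needed, and condition (UBC) plays no role here. Then \lemref{lem:properiscompactlyaligned} pushes compactness of $s(U)$ and $s(V)$ through the proper range maps and the continuous, open composition map to conclude that $U\vee V=U\L^{(p\vee q)-p}\cap V\L^{(p\vee q)-q}$ is compact for all $p,q\in\N^k$ and all compact $U\subseteq\L^p$, $V\subseteq\L^q$. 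Since in $\N^k$ the join $p\vee q$ (the coordinatewise maximum) always exists, this compactness holds for every pair $p,q$ with no exceptional case to exclude, which is precisely what compact alignment demands. Hence the proposition follows at once.
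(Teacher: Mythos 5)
Your proposal is correct and matches the paper's proof exactly: the paper also deduces Proposition~\ref{prop:lambdaca} in one line by combining Lemma~\ref{lem:lambdaisproper} (properness of $\L$) with Lemma~\ref{lem:properiscompactlyaligned} (proper implies compactly aligned). Your added recollection of the two lemmas' content is accurate but not part of the paper's argument for this proposition.
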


\begin{proof}
This follows directly from \lemref{lem:lambdaisproper} and Lemma~\ref
{lem:properiscompactlyaligned}.
\end{proof}

%------subsection describing associated Exel-Larsen system
\subsection{The Exel-Larsen system associated to $(\O,\{T_i\}_{i=1}^k)$}\label{subsec:exlar}

Given a topological dynamical system $(\O,\{T_i\}_{i=1}^k)$ that satisfies condition (UBC), we may construct an \emph{Exel-Larsen system}.  

\begin{lem}\label{lem:FvsK}
Let $(\Omega,\{T_i\}_{i=1}^k)$ be a topological dynamical system that satisfies condition (UBC).  For $m\in\N^k$, define $\a_m\in\End(C_0(\O))$ by 
	\[
	\a_m(f)=f \circ\Th_m, \for f\in C_0(\O)
	\]
so that $\a$ is an action of $\N^k$ on $C_0(\O)$. Then $\alpha$ extends uniquely to an endomorphism $\overline{\alpha_s}$ of $C_b(\Omega)$ satisfying \eqref{eq:alphaext}.
\end{lem}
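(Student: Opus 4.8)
The plan is to write the extension down explicitly. For $m\in\N^k$ define $\overline{\a_m}\colon C_b(\O)\to C_b(\O)$ by $\overline{\a_m}(g)=g\circ\Th_m$; since $\Th_m$ is continuous this sends bounded continuous functions to bounded continuous functions, and $g\mapsto g\circ\Th_m$ is visibly a unital $*$-endomorphism of $C_b(\O)$ that restricts to $\a_m$ on $C_0(\O)$. Two preliminary facts are needed. The first is that $\a_m$ genuinely maps $C_0(\O)$ into itself: for $f\in C_0(\O)$ and $\epsilon>0$ we have $\{x\in\O:|\a_m(f)(x)|\geq\epsilon\}=\Th_m^{-1}(K_\epsilon)$ with $K_\epsilon=\{y\in\O:|f(y)|\geq\epsilon\}$ compact, so it is enough to show that $\Th_m$ is a proper map, and this is where condition (UBC) enters: covering $K_\epsilon$ by finitely many open sets each evenly covered by at most $N$ relatively compact open sets on which $\Th_m$ restricts to a homeomorphism exhibits $\Th_m^{-1}(K_\epsilon)$ as a closed subset of a finite union of relatively compact sets, hence compact. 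The second fact is that $\a$ is genuinely an action of $\N^k$: pairwise commutativity of the $T_i$ gives $\Th_{m+n}=\Th_m\circ\Th_n$, hence $\a_{m+n}=\a_m\circ\a_n$ and $\overline{\a_{m+n}}=\overline{\a_m}\circ\overline{\a_n}$ (so in particular it suffices throughout to argue for the generators $\a_{e_1},\dots,\a_{e_k}$).

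Next I would check the compatibility condition \eqref{eq:alphaext}, that is, $\overline{\a_m}(1_{C_b(\O)})=\lim_\lambda\a_m(u_\lambda)$ in the strict topology of $M(C_0(\O))=C_b(\O)$ for some (equivalently, every) approximate unit $(u_\lambda)$ of $C_0(\O)$. Here $\overline{\a_m}(1)=1\circ\Th_m=1$, so the task is to see $\a_m(u_\lambda)=u_\lambda\circ\Th_m\to 1$ strictly, i.e. $(u_\lambda\circ\Th_m)f\to f$ in norm for each $f\in C_0(\O)$ (commutativity takes care of multiplication on the other side). Taking $(u_\lambda)$ to consist of functions in $C_c(\O)$ with $0\leq u_\lambda\leq 1$ that are identically $1$ on the sets of a compact exhaustion of $\O$, and fixing $\delta>0$, I would split $\O$ as $\{|f|\geq\delta\}\cup\{|f|<\delta\}$: on the first piece $\Th_m(\{|f|\geq\delta\})$ is compact, so $u_\lambda\circ\Th_m\to 1$ uniformly there, while on the second piece $|f|<\delta$ controls the error, giving $\|(u_\lambda\circ\Th_m)f-f\|_\infty\to 0$. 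Thus \eqref{eq:alphaext} holds.

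For uniqueness I would invoke the standard fact that $C_0(\O)$ is strictly dense in its multiplier algebra and that $\overline{\a_m}$ is strictly continuous on bounded sets (proved exactly as in the previous paragraph): if $\beta$ is any endomorphism of $C_b(\O)=M(C_0(\O))$ restricting to $\a_m$ and compatible with the strict structure, then for $g\in C_b(\O)$ one has $gu_\lambda\to g$ strictly with $gu_\lambda\in C_0(\O)$, whence $\beta(g)f=\lim_\lambda\a_m(gu_\lambda)f=\overline{\a_m}(g)f$ for all $f\in C_0(\O)$, so $\beta=\overline{\a_m}$. The step I expect to be the main obstacle is the first preliminary fact above --- that condition (UBC) makes $\Th_m$ proper, so that $\a_m$ preserves $C_0(\O)$; once that is in place, writing down $\overline{\a_m}$ and checking \eqref{eq:alphaext} and uniqueness are routine arguments with composition operators and the strict topology.
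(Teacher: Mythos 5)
Your overall strategy is more explicit than the paper's, and most of it is sound, but the step you yourself single out as the main obstacle --- that condition (UBC) forces $\Th_m$ to be proper --- is false, and the covering argument you sketch for it does not work. Take $\O=\mathbb{R}$ and $T=\arctan$: this is a local homeomorphism of $\O$ (indeed a homeomorphism onto the open set $(-\pi/2,\pi/2)$) with $|T\inv(y)|\leq 1$ for every $y$, so $(\O,\{T\})$ satisfies (UBC) with $N=1$, yet $T\inv([-2,2])=\mathbb{R}$ is not compact. The flaw is in the phrase ``evenly covered'': a point $y$ of $K_\epsilon$ lying outside, or on the boundary of, $\Th_m(\O)$ need not have a neighborhood $V$ with $\Th_m\inv(V)$ a union of at most $N$ relatively compact sheets --- in the example every neighborhood of $y=\pi/2$ has preimage containing an interval $(\tan(\pi/2-\delta),\infty)$. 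A local homeomorphism with uniformly bounded fibres is far from a covering map, so properness of $\Th_m$ is a genuinely independent hypothesis and cannot be extracted from (UBC).

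The paper sidesteps this issue entirely: its proof never verifies $\a_m(C_0(\O))\subseteq C_0(\O)$ (and does not use (UBC) at all). Instead it shows that $\a_m$ is \emph{nondegenerate}, i.e.\ $\a_m(C_0(\O))C_0(\O)=C_0(\O)$ --- given $g\in C_c(\O)$, Urysohn's lemma produces $f\in C_c(\O)$ with $f\equiv 1$ on the compact set $\Th_m(\supp(g))$, whence $\a_m(f)g=g$ --- and then invokes \cite[Proposition 1.1.13]{jt}: a nondegenerate homomorphism into $M(C_0(\O))=C_b(\O)$ has a unique strictly continuous unital extension to the multiplier algebra, and unitality is exactly \eqref{eq:alphaext}. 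Your direct verification of \eqref{eq:alphaext} via the splitting of $\O$ into $\{|f|\geq\delta\}$ and $\{|f|<\delta\}$, and your uniqueness argument via strict density, are correct and in effect reprove that citation by hand; there is nothing wrong with that route. But as written your proof rests on a false intermediate claim. Either assume properness of the maps outright (it does hold in all of the paper's examples), or drop the properness step and argue through nondegeneracy as the paper does, which requires no properness at all.
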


\begin{proof}
First note that each $\a_m$ is nondegenerate, i.e., $\a_m(C_0(\O))C_0(\O)=C_0(\O)$.  To see this,  it is enough to show that for $g\in C_c(\O)$ there is $f\in C_c(\O)$ such that $\a(f)g=g$.  Since $\Th_m$ is continuous and $g$ is compactly supported, the set $\Th_m(\supp(g))$ is compact.  By Urysohn's Lemma for locally compact Hausdorff spaces, we may choose $f\in C_c(\O)$ such that $f|_{\Th_m(\supp(g))}=1$.  It follows that $\a_m(f)g=g$.  

Since $\a_m(C_0(\O))C_0(\O)=C_0(\O)$, the unique strictly continuous extension $\overline{\a}_m$ defined by $\overline{\a}_m(f)=f\circ\Th_m$, for $f\in C_b(\O)$ is unital (see \cite[Proposition 1.1.13]{jt}) so that $\overline{\a}_m$ satisfies \eqref{eq:alphaext} as desired.
\end{proof}

\begin{lem}\label{lem:transferopext}
	Let $(\O,\{T_i\}_{i=1}^k)$ be a topological dynamical system that satisfies condition (UBC).  For $m\in\N^k$, $f\in C_0(\O)$, and $x\in \O$ define ${L}_m$ by
	\[
	{L}_m(f)(x)=\begin{cases} \sum_{\Th_m(y)=x}f(y) & \textrm{if } x\in \Th_m(\O) \\ 0 & \textrm{else}
\end{cases}
	\]
	and similarly define $\overline{L}_m$ for $f\in C_b(\O)$.  Then each $L_m$ is a continuous,
linear, positive map on $C_0(\O)$ with continuous linear extension $\overline{L}_m$ satisfying
	\[
	L_m(\a_m(f)g)=f\overline{L}_m(g).
	\]
\end{lem}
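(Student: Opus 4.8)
The plan is to prove the statement for $L_m$ first on $C_c(\O)$, then extend to $C_0(\O)$ by density, and finally to $C_b(\O)$. It is convenient to rewrite the definition in the uniform form $L_m(f)(x)=\sum_{\Th_m(y)=x}f(y)$, with the convention that an empty sum equals $0$; this agrees with the given piecewise formula since the fibre $\Th_m\inv(x)$ is empty precisely when $x\notin\Th_m(\O)$. From this form, linearity and positivity (if $f\geq 0$ then every summand is $\geq 0$) are immediate, and condition (UBC) — which, as noted after \defnref{def:unifboundinvimage}, supplies a uniform bound $N$ with $|\Th_m\inv(x)|\leq N$ for all $x$ — gives $\|L_m(f)\|_\infty\leq N\|f\|_\infty$, so $L_m$ is bounded.

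The substantive point is that $L_m(f)\in C_c(\O)$ when $f\in C_c(\O)$. Since $\Th_m$ is a local homeomorphism, I would cover the compact set $\supp f$ by finitely many open sets $U_1,\dots,U_n$ on each of which $\Th_m$ restricts to a homeomorphism onto the open set $V_l:=\Th_m(U_l)$, choose a partition of unity $\{\rho_l\}$ subordinate to $\{U_l\}$ with $\sum_l\rho_l\equiv 1$ on $\supp f$, and set $f_l:=\rho_l f\in C_c(\O)$, so that $f=\sum_l f_l$ and $\supp f_l\subseteq U_l$. By linearity it suffices to treat each $f_l$. Writing $\sigma_l:=(\Th_m|_{U_l})\inv:V_l\to U_l$, one checks directly — using that $\Th_m|_{U_l}$ is injective and that $f_l$ vanishes off $U_l$ — that $L_m(f_l)(x)=f_l(\sigma_l(x))$ for $x\in V_l$ and $L_m(f_l)(x)=0$ for $x\notin V_l$. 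Since $\Th_m(\supp f_l)$ is a compact subset of $V_l$ outside of which $L_m(f_l)$ vanishes, the pasting lemma applied to the open cover $\{V_l,\ \O\setminus\Th_m(\supp f_l)\}$ shows $L_m(f_l)$ is continuous with compact support. Hence $L_m(C_c(\O))\subseteq C_c(\O)$; since $C_0(\O)=\overline{C_c(\O)}$ is closed under uniform limits, boundedness of $L_m$ then gives $L_m(C_0(\O))\subseteq C_0(\O)$, so $L_m$ is the desired continuous, linear, positive map.

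For the extension, define $\overline{L}_m:C_b(\O)\to C_b(\O)$ by the same formula $\overline{L}_m(g)(x)=\sum_{\Th_m(y)=x}g(y)$; linearity is clear and $\|\overline{L}_m(g)\|_\infty\leq N\|g\|_\infty$ by (UBC), so the only issue is continuity of $\overline{L}_m(g)$. Here I would invoke \lemref{lem:FvsK}: since $\a_m(f)\in C_0(\O)$ for $f\in C_0(\O)$, we have $\a_m(f)g\in C_0(\O)$ for $g\in C_b(\O)$, so $L_m(\a_m(f)g)$ is given by the formula for $L_m$, and a pointwise computation yields
\[
\bigl(f\cdot\overline{L}_m(g)\bigr)(x)=f(x)\sum_{\Th_m(y)=x}g(y)=\sum_{\Th_m(y)=x}f(\Th_m(y))\,g(y)=\sum_{\Th_m(y)=x}\a_m(f)(y)\,g(y)=L_m(\a_m(f)g)(x).
\]
Thus $f\cdot\overline{L}_m(g)\in C_0(\O)$ for every $f\in C_0(\O)$; choosing, near any $x_0\in\O$, some $f\in C_c(\O)$ with $f\equiv 1$ on a neighbourhood of $x_0$, we see $\overline{L}_m(g)$ agrees near $x_0$ with the continuous function $f\cdot\overline{L}_m(g)$, so $\overline{L}_m(g)\in C_b(\O)$. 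The displayed equation is exactly the transfer operator identity $L_m(\a_m(f)g)=f\overline{L}_m(g)$, and $\overline{L}_m$ visibly restricts to $L_m$ on $C_0(\O)$, completing the proof.

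I expect the continuity of $L_m(f)$ to be the main obstacle: as $x$ varies the points of $\Th_m\inv(x)$ may be created, destroyed, or collide, so there is no a priori reason the pointwise sum of continuous functions should be continuous. Compact support of $f$ together with the local-homeomorphism/partition-of-unity device is what reduces the sum to finitely many single-valued continuous branches and keeps the relevant mass inside the charts, so that the pasting lemma applies; the $C_b$ case cannot be handled the same way (since $g$ need not be compactly supported) and is instead obtained from the module identity, which forces continuity through the multiplier structure $C_b(\O)=M(C_0(\O))$.
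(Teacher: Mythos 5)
Your proof is correct, and while the skeleton (direct verification of linearity, positivity, the bound $N_m\|f\|_\infty$ from (UBC), and the pointwise computation of the transfer identity) matches the paper's, you take a genuinely different and more careful route on the two analytic points. For continuity of $L_m(f)$, the paper localizes around a point $x$: it picks charts $U_y$ over a neighborhood $V$ of $x$ and asserts that $L_m(f)$ is locally the finite sum of the branches $f\circ(\Th_m|_{U_y})^{-1}$; as written this implicitly assumes every preimage of every $x'\in V$ lying in $\supp f$ falls into one of the chosen $U_y$, which requires an additional compactness step (shrinking $V$ off the closed set $\Th_m(\supp f\setminus\bigcup_y U_y)$). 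Your device of localizing $f$ itself --- partition of unity subordinate to a finite chart cover of $\supp f$, reducing to summands $f_l$ with $L_m(f_l)=f_l\circ\sigma_l$ on $V_l$ and $0$ off $\Th_m(\supp f_l)$, then passing from $C_c$ to $C_0$ by density and boundedness --- sidesteps that issue entirely and also delivers vanishing at infinity for free. For the extension to $C_b(\O)$ the paper says only ``by an argument similar to the one above,'' but the localization trick genuinely needs compact support, so it does not transfer verbatim; your derivation of continuity of $\overline{L}_m(g)$ from the already-established identity $f\cdot\overline{L}_m(g)=L_m(\a_m(f)g)$ together with local units in $C_c(\O)$ is a clean and legitimate repair, at the modest cost of leaning on \lemref{lem:FvsK} for $\a_m(f)\in C_0(\O)$. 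In short: same statement-level strategy, but your handling of the two continuity claims is both different from and tighter than the paper's.
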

\begin{proof}

Fix $m\in \N^k$.  To see that $L_m$ maps $C_0(\O)$ into $C_0(\O)$, let $x\in \O$.  Then there is an
open neighborhood $V$ of $x$ and an open neighborhood $U_y$ for each $y\in \Th_m^{-1}(\{x\})$
such that $\Th_m|_{U_y}:U_y\to V$ is a homeomorphism.  {Condition (UBC)} ensures that there are finitely many such sets $U_y$.  It follows then that $L_m (f)$ is the finite sum of the functions $f|_{U_y}$ and is hence is in $C_0(\O)$.
\color{black}

It is straightforward to see that, for each $m\in\N^k$, $L_m$ is continuous, linear, and positive.  We
must show that $\overline{L}_m$ is a continuous linear extension of $L_m$ satisfying \eqref
{eqn:transferop}.  By an argument similar to the one above, we know that $\overline{L}_m$ maps
$C_b(\O)$ to $C_b(\O)$.  Suppose $f,g\in C_b
(\O)$, $a,b\in\C$, $x\in\Th_m(\O)$.  Then
	\begin{align*}
		(a\overline{L}_m(f)-b\overline{L}_m(g))(x) &= a\overline{L}_m(f)(x)-b\overline{L}_m(g)(x) \\
		&= \sum_{\Th_m(y)=x}af(y)-\sum_{\Th_m(y)=x}bg(y) \\
		&= \sum_{\Th_m(y)=x}(af-bg)(y) \\
		&= \overline{L}_m(af-by) (x)
	\end{align*}
	If $x \notin \Th_m(\O)$, then both $(a\overline{L}_m(f)-b\overline{L}_m(g))(x)$ and $\overline
{L}_m(af-by) (x)$ are zero.  Thus, $\overline{L}_m$ is linear.
	Continuity of $\overline{L}_m$ follows from the fact that $(\O, \{T_i\}_{i=1}^k)$ {satisfies condition (UBC)} because for any $x\in \Th_m(\O)$, we have	
    \begin{align*}
		\left|\overline{L}_m(f)(x)\right| & = \left| \sum_{\Th_m(y)=x}f(y)\right| \\
		&\leq \sum_{\Th_m(y)=x}\left|f(y)\right| \\
		&\leq N_m\cdot \|f\|_\infty,
	\end{align*}
	where $N_m\in\N$ is the uniform bound on the cardinality of the inverse image of $\Th_m$.  If
$x\notin\Th_m(\O)$, then $\left|\overline{L}_m(f)(x)\right| =0$, and the inequality
	\[
	\left|\overline{L}_m(f)(x)\right|  \leq N_m\|f\|_\infty
	\]
	holds for all $x\in \O$.
	Taking the supremum over all $x\in \O$ gives that
	\[
	\|\overline{L}_m(f)\|\leq N_m\|f\|_\infty
	\]
	so that $\overline{L}_m$ is bounded.  Since $\overline{L}_m$ is a linear map on a normed
space, it follows that it is continuous.
	
Now if $f\in C_0(\O)$, $g\in C_b(\O)$, and $x\in \Th_m(\O)$, it follows that  \begin{align*}
		L_m(\a(f)g)(x) &= \sum_{\Th_m(y)=x} (\a_m(f)g)(y) \\
		&= \sum_{\Th_m(y)=x}f(\Th_m(y))g(y) \\
		&= f(x) \sum_{\Th_m(y)=x} g(y) \\
		&= (f\overline{L}_m(g))(x).
	\end{align*}
	When $x\notin\Th_m(\O)$, both $L_m(\a(f)g)(x)$ and $(f\overline{L}_m(g))(x)$ are zero.  Thus
$L_m(\a_m(f)g)=f\overline{L}_m(g).$
\end{proof}
It follows from Lemma~\ref{lem:FvsK} and Lemma~\ref{lem:transferopext} that $(C_0(\O),\N^k,\a,L)$ is an Exel-Larsen system.

\begin{exs}\label{exs:EL}
\begin{enumerate}\renewcommand{\theenumi}{\ref*{exs:EL} (\roman{enumi})}
	\item \label{ex:fullshift:EL}
	Let $A=\{0,1,2,3\}$ and define $d:A^2\to A$ via $(a,b)\mapsto a+b \mod 4$.  It is
straightforward to see that $d$ is both progressive and regressive and hence the associated sliding
block code $\tau_d$ is a local homeomorphism that has uniformly bounded cardinalities on inverse
images.  Then $(A^\N,\{\sigma,\tau_d\})$ is a topological dynamical system {that satisfies condition (UBC)} as in \exref{ex:fullshift}.
Define $\a:\N^2\to C(A^\N)$ by $\a_{(m,n)}(f)=f\circ\Th_{(m,n)}$ and, since each $\Th_m$ is
surjective, define $L_{(m,n)}:C(A^\N)\to C(A^\N)$ by
\[
L_{(m,n)}(f)(x)=\sum_{\Th_{(m,n)}(y)=x}f(y)
\]
for $(m,n)\in\N^2$, the quadruple $(A^\N,\N^2, \a,L)$ is an Exel-Larsen system.
	\item \label{ex:kgraphshift:EL}
	Let $\L=\O_k$ and let $(\partial\L,\{T_i\}_{i=1}^k)$ be the topological dynamical system as in
\exref{ex:kgraphshift}.  We may define the Exel-Larsen system $(C_0(\L^\infty),\N^k,\a,L)$ where
	\begin{align*}
		\a_m(f)(x) &= f\circ\sigma_m (x) \\
		L_m(f)(x) &= \begin{cases} \sum_{\sigma_m(y)=x} f(y) & \textrm{ if } x\in \sigma_m(\L^
\infty) \\ 0 & \textrm{ else } \end{cases}
	\end{align*}
	Again regarding $\L^\infty$ as $\N^k$, it is straightforward to show that
	\begin{align*}
		\a_m(f)(n) &= f(n+m) \\
		L_m(f)(n) &= \begin{cases} f(n-m) & \textrm{ if } n-m\in \N^k \\ 0 & \textrm{ else} \end
{cases}
	\end{align*}
	so that we obtain the Exel-Larsen system $(C_0(\N^k),\N^k,\a,L)$.
\end{enumerate}
\end{exs}

%-------Section describing the construction of the associated product systems
\section{The associated product systems}\label{sec:XLar}
Associated to the topological $k$-graph $\L=(\L(\O,\{T_i\}_{i=1}^k),d)$ and the Exel-Larsen system $(C_0(\O),
\N^k,\a,L)$ are product systems over $\N^k$ of $C_0(\O)$-\csps, denoted $X^{\L}$ and
$X^{Lar}$ respectively.  We show in Theorem \ref{prop:prodsysisom} that the two product
systems are in fact isomorphic.

%----prod sys associated to the topological k-graph
\begin{defn}\label{def:topkgraphcorresp}
	The \emph{topological $k$-graph \csp} associated to a topological $k$-graph $
\L$ is the product system $X^\L$ over $\N^k$ of $C_0(\O)$-\csps~such that:	\begin
{enumerate}
		\item For each $m\in\N^k$, $X^\L_m$ is the topological graph \csp~associated to the
topological graph
	\begin{align*}
		E_m &= (\L^0,\L^m,r|_{\L^m},s|_{\L^m}) \\
		&= (\O,\{m\}\times\O,r_m,s_m)
	\end{align*}
	In particular, $X^\L_m$ is a completion of $C_c(\{m\}\times\O)$ (see \cite{katsgen} for details)
and the $C_0(\O)$-bimodule operations and $C_0(\O)$-valued inner product are given by
	\begin{align*}
		(f\cdot\xi\cdot g)(m,x) &= f(r(m,x))\xi(m,x)g(s(m,x)) \\
		&= f(x)\xi(m,x)g(\Th_m(x)), \textrm{ and} \\
		\<\xi,\eta\>_m(x) &= \sum_{(m,y)\in s_m^{-1}(x)}\overline{\xi(m,y)}\eta(m,y).
	\end{align*}
	
		\item For $m,n\in\N^k$, $\beta^\L_{m,n}:X^\L_m\otimes_{C_0(\O)}X^\L_n\to X^\L_{m+n}$ is
defined by
	\[
	\beta^\L_{m,n}(\xi\otimes\eta)(m+n,x)=\xi(m,x)\eta(n,\Th_m(x)).
	\]
	\end{enumerate}
\end{defn}

{\begin{rem}
It is important to note that a topological $k$-graph correspondence is not a $C^*$-correspondence, but is instead a product system over $\N^k$ of $C^*$-correspondences.  We use the terminology \emph{topological $k$-graph correspondence} to agree with the existing notions of \emph{graph correspondence} (see \cite{raeburngraph} for example) and \emph{topological graph correspondence} (see \cite{katsgen}).  The Cuntz-Pimsner algebras of a graph correspondence and a topological graph correspondence are isomorphic to the graph $C^*$-algebra and topological graph $C^*$-algebra, respectively.  Similarly, a generalization of the Cuntz-Pimsner algebra of the topological $k$-graph correspondence is isomorphic to the topological $k$-graph $C^*$-algebra (see \cite[Theorem 5.20]{clsv}).
\end{rem}}

%-----prod sys associated to the Exel-Larsen system
\begin{defn}\label{def:XLar}
	The \emph{product system associated to the Exel-Larsen system $(C_0(\O),\N^k,\a,L)$} is
the product system $X^{Lar}$ over $\N^k$ of $C_0(\O)$-\csps~such that:
	\begin{enumerate}
		\item For each $m\in\N^k$, $X^{Lar}_m=\{m\}\times C_0(\O)$ with $C_0(\O)$-bimodule
operations
	\[
	f\cdot(m,g)\cdot h=(m,fg\a_m(h)),
	\]
	where $(fg\a_m(h))(x)=f(x)g(x)h(\Th_m(x))$, and $C_0(\O)$-valued inner product
	\[
	\<(m,f),(n,g)\>_m(x)=L_m(f^*g)(x)=\begin{cases} \sum_{\Th_m(y)=x}\overline{f(y)}g(y) & \textrm
{if } x\in\Th_m(\O) \\ 0& \textrm{otherwise}.\end{cases}
	\]
		\item  For $m,n\in\N^k$, $\beta^{Lar}_{m,n}:X^{Lar}_m\otimes_{C_0(\O)}X^{Lar}_n\to X^
{Lar}_{m+n}$ is defined by
	\[
	\beta^{Lar}_{m,n}((m,f)\otimes(n,g))=(m+n,f\a_m(g)).
	\]
	\end{enumerate}
\end{defn}

%-------the subsection describing the isomorphism of the two product systems
\subsection{The isomorphism of product systems}

\begin{thm}\label{prop:prodsysisom}
	Let $(\O, \{T_i\}^k_{i=1})$ be a topological dynamical system {satisfying condition (UBC)}, let $\L=(\L(\O,\{T_i\}_{i=1}^k),d)$ be the
associated topological $k$-graph, and let $(C_0(\O),\N^k,\a,L)$ be the associated Exel-Larsen
system.  Then the topological $k$-graph \csp~$X^{\L}$ is isomorphic to the product system $X^
{Lar}$ associated to the Exel-Larsen system.
\end{thm}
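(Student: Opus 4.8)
The plan is to construct an explicit isomorphism of product systems $\Phi\colon X^{Lar}\to X^\L$ fibrewise, by sending the generator $(m,f)\in X^{Lar}_m=\{m\}\times C_0(\O)$ to the function $\Phi_m(m,f)\in C_c(\{m\}\times\O)$ (densely defined, then extended) given by $\Phi_m(m,f)(m,x)=f(\Th_m(x))$, and to check that each $\Phi_m$ is a $C_0(\O)$-bimodule map preserving the inner product, and that the family $\{\Phi_m\}$ intertwines the multiplication maps $\beta^{Lar}_{m,n}$ and $\beta^\L_{m,n}$. First I would note that $\Phi_0=\id$ on $X^{Lar}_0=C_0(\O)=X^\L_0$. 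For the bimodule structure, one computes on the dense subspaces: $\Phi_m(f\cdot(m,g)\cdot h)(m,x)=(fg\a_m(h))(\Th_m(x))$, wait — one must be careful here, since the left action of $f$ is by $f(r(m,x))=f(x)$ in $X^\L_m$ but the element $(m,g)$ already carries $g$ evaluated along $\Th_m$; the correct candidate map is forced by matching the left/right actions, so I expect the right formula to send $(m,g)$ to the function $(m,x)\mapsto g(x)$ on $C_c(\{m\}\times\O)$ after identifying $X^{Lar}_m$'s right action (which involves $\a_m$) — I would pin down the formula by first matching the $C_0(\O)$-valued inner products, since those are rigid: $\<(m,f),(m,g)\>^{Lar}_m(x)=L_m(\overline f g)(x)=\sum_{\Th_m(y)=x}\overline{f(y)}g(y)$ for $x\in\Th_m(\O)$, while $\<\xi,\eta\>^\L_m(x)=\sum_{(m,y)\in s_m^{-1}(x)}\overline{\xi(m,y)}\eta(m,y)$; since $s_m^{-1}(x)=\{(m,y):\Th_m(y)=x\}$, the inner products agree precisely when $\Phi_m(m,f)(m,y)=f(y)$. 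So the correct map is $\Phi_m(m,f)=\chi_{\{m\}}\otimes f$, i.e. $\Phi_m(m,f)(m,y)=f(y)$.

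With the map thus determined, I would verify in turn: (i) $\Phi_m$ is isometric on the dense subspace $\{m\}\times C_c(\O)$ by the inner-product computation above, hence extends to an isometry of Hilbert modules $X^{Lar}_m\to X^\L_m$; (ii) $\Phi_m$ has dense range, since $C_c(\{m\}\times\O)$ spans a dense subspace of $X^\L_m$ and is visibly in the image, so $\Phi_m$ is a Hilbert-module isomorphism; (iii) $\Phi_m$ respects the right action: $\Phi_m((m,f)\cdot h)(m,y)=\Phi_m(m,f\a_m(h))(m,y)=f(y)h(\Th_m(y))=\Phi_m(m,f)(m,y)\,h(s(m,y))$, which is exactly the right action in $X^\L_m$; (iv) $\Phi_m$ respects the left action: $\Phi_m(f\cdot(m,g))(m,y)=\Phi_m(m,fg)(m,y)=f(y)g(y)=f(r(m,y))\Phi_m(m,g)(m,y)$, matching the left action in $X^\L_m$. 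Thus each $\Phi_m$ is a $C_0(\O)$-\csp~isomorphism.

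Finally I would check compatibility with the semigroup multiplication, i.e.\ that the diagram relating $\beta^{Lar}_{m,n}$ and $\beta^\L_{m,n}$ via $\Phi_m\otimes\Phi_n$ and $\Phi_{m+n}$ commutes. On generators: $\Phi_{m+n}(\beta^{Lar}_{m,n}((m,f)\otimes(n,g)))=\Phi_{m+n}(m+n,f\a_m(g))$, which at $(m+n,x)$ equals $(f\a_m(g))(x)=f(x)g(\Th_m(x))$; on the other hand $\beta^\L_{m,n}(\Phi_m(m,f)\otimes\Phi_n(n,g))(m+n,x)=\Phi_m(m,f)(m,x)\,\Phi_n(n,g)(n,\Th_m(x))=f(x)g(\Th_m(x))$. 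These agree, so $\Phi:=\bigsqcup_m\Phi_m$ is a semigroup homomorphism, hence an isomorphism of product systems. The main obstacle, and the only genuinely non-formal point, is the passage from the dense subspaces to the completions: one must confirm that the spanning/density statements used for $X^\L_m$ (as a completion of $C_c(\{m\}\times\O)$, per \cite{katsgen}) and for $X^{Lar}_m=\{m\}\times C_0(\O)$ really do make $\{m\}\times C_c(\O)$ a common dense subspace on which the algebraic identities above hold verbatim, and that condition (UBC) is what guarantees the inner-product sums are finite and continuous — but this is supplied by \lemref{lem:FvsK} and \lemref{lem:transferopext}. Everything else is a direct comparison of the two explicit formulas in \defnref{def:topkgraphcorresp} and \defnref{def:XLar}.
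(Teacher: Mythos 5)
Your proposal is correct and, once you settle on the formula $\Phi_m(m,f)(m,y)=f(y)$, it is essentially the paper's own proof: the paper uses exactly this map ($\psi_m(m,f)=\widetilde f$ with $\widetilde f(m,x)=f(x)$), derives the bimodule and inner-product compatibility from the homeomorphism $(m,x)\mapsto x$, and performs the same multiplication check. The only cosmetic difference is that you handle membership of $\widetilde f$ in $X^\L_m$ by isometric extension from $C_c$, whereas the paper establishes it via Lemma~\ref{lem:bound} identifying $X^\L_m$ with $C_0(\{m\}\times\O)$ under condition (UBC); both routes use (UBC) in the same essential way.
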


To show that the product systems are isomorphic we must show there is a map $\psi:X^{Lar}\to X^\L$ satisfying
	\begin{enumerate}
		\item for each $m\in\N^k$, the map $\psi_m=\psi|_{X^{Lar}_m}:X^{Lar}_m\to X^\L_m$ is
a $C_0(\O)$-\csp~isomorphism that preserves inner product, and
		\item $\psi$ respects the multiplication in the semigroups $X^{Lar}$ and $X^\L$.
	\end{enumerate}

The following lemmas are helpful in proving our result.
%------result that the uniformly bounded condition simplifies the structure of the correspondences
\begin{lem}\label{lem:bound}
	Let $E=(E^0,E^1,r,s)$ be a topological graph such that the source map $s:E^1\to E^0$ has
uniformly bounded cardinalities on inverse images.  Then the associated graph correspondence $X_E=C_0(E^1)$ as an algebraic $C_0(E^0)$-bimodule.
\end{lem}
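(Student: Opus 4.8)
The claim is that when $s:E^1\to E^0$ has uniformly bounded cardinalities on inverse images, the graph correspondence $X_E$—normally defined as a completion of $C_c(E^1)$ in the norm coming from the $C_0(E^0)$-valued inner product—is already equal to $C_0(E^1)$ with no completion needed; equivalently, $C_c(E^1)$ is already complete in the inner-product norm, and this norm is equivalent to the supremum norm on $C_0(E^1)$. The plan is to establish a two-sided norm estimate
\[
\|\xi\|_\infty^2 \;\le\; \|\<\xi,\xi\>\|_\infty \;\le\; N\,\|\xi\|_\infty^2
\quad\text{for }\xi\in C_c(E^1),
\]
where $N$ is the uniform bound on $|s^{-1}(v)|$, $v\in E^0$. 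The upper estimate is the one that uses the hypothesis: for each $v\in E^0$,
\[
\<\xi,\xi\>(v)=\sum_{e\in s^{-1}(v)}|\xi(e)|^2 \le N\cdot\|\xi\|_\infty^2,
\]
since the sum has at most $N$ terms each bounded by $\|\xi\|_\infty^2$. Taking the supremum over $v$ gives the right inequality. For the lower estimate, given $e\in E^1$ with $|\xi(e)|$ close to $\|\xi\|_\infty$, one term of the sum $\<\xi,\xi\>(s(e))$ is $|\xi(e)|^2$, and all terms are nonnegative, so $\<\xi,\xi\>(s(e))\ge|\xi(e)|^2$; taking suprema gives the left inequality.

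Once the norm equivalence is in hand, I would argue as follows. First, the inner-product norm on $C_c(E^1)$ is complete precisely when $C_c(E^1)$ is complete in $\|\cdot\|_\infty$; but $C_0(E^1)$ is the completion of $C_c(E^1)$ in $\|\cdot\|_\infty$, so the completion of $C_c(E^1)$ in the inner-product norm is (isometrically, up to the equivalence constants, as a set) $C_0(E^1)$. To see that the inner product and bimodule operations extend to all of $C_0(E^1)$ and that $\<\xi,\xi\>\in C_0(E^0)$ for every $\xi\in C_0(E^1)$ (not just $C_c$), I would take $\xi\in C_0(E^1)$ and a sequence $\xi_n\in C_c(E^1)$ with $\|\xi_n-\xi\|_\infty\to0$; then $\<\xi_n,\xi_n\>$ is Cauchy in $\|\cdot\|_\infty$ by the upper estimate applied to $\xi_n-\xi_m$, hence converges in $C_0(E^0)$, and the limit is the continuous function $v\mapsto\sum_{e\in s^{-1}(v)}|\xi(e)|^2$ (the pointwise formula still makes sense since each fibre is finite). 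The same reasoning handles the $C_0(E^0)$-valued inner product $\<\xi,\eta\>$ for general $\xi,\eta$ and shows the bimodule actions $f\cdot\xi\cdot g$ stay in $C_0(E^1)$. Therefore $X_E$, the completion, is $C_0(E^1)$ as a set with these operations—i.e., equals $C_0(E^1)$ as an algebraic $C_0(E^0)$-bimodule, which is the assertion.

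I expect the only genuine subtlety to be making rigorous the claim that the pointwise formula $\<\xi,\eta\>(v)=\sum_{e\in s^{-1}(v)}\overline{\xi(e)}\eta(e)$ defines a function in $C_0(E^0)$ for arbitrary $\xi,\eta\in C_0(E^1)$—i.e., continuity and vanishing at infinity of the summed-up function. Continuity follows from the local structure of $s$: around any $v$ there is a neighbourhood $V$ and finitely many open sets $U_1,\dots,U_j$ ($j\le N$) on which $s$ restricts to homeomorphisms onto $V$, so locally the sum is a finite sum of compositions of continuous functions with continuous local sections of $s$, exactly as in the proof of \lemref{lem:transferopext}; vanishing at infinity follows from the bound $|\<\xi,\eta\>(v)|\le N\|\xi\|_\infty\|\eta\|_\infty$ together with density of $C_c(E^1)$. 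Everything else is a routine matter of transporting estimates through the completion. The hypothesis that $s$ has uniformly bounded cardinalities on inverse images is used in exactly one essential place: the uniform upper bound $N$ on the number of summands, which is what prevents the inner-product norm from being strictly weaker than $\|\cdot\|_\infty$ and forces completeness of $C_c(E^1)$ to already hold in the relevant norm.
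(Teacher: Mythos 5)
Your proof is correct, but it is organized differently from the paper's. The paper argues by double containment using Katsura's concrete description of $X_E$: it quotes \cite[Lemma 1.6]{katsgen} to get $X_E\subseteq C_0(E^1)$, and for the reverse containment it takes $\xi\in C_0(E^1)$ and verifies directly that $v\mapsto\sum_{e\in s^{-1}(v)}|\xi(e)|^2$ lies in $C_0(E^0)$, using the finitely many local sections of $s$ over a neighbourhood of $v$. You instead prove the two-sided estimate $\|\xi\|_\infty^2\leq\|\langle\xi,\xi\rangle\|_\infty\leq N\|\xi\|_\infty^2$ on $C_c(E^1)$ and identify the completion of $C_c(E^1)$ in the Hilbert-module norm with its completion in the supremum norm, then transport the inner product and bimodule operations through the completion by approximation. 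The two arguments use the same two ingredients (the uniform bound $N$ and the local-homeomorphism structure of $s$), but your route makes the role of condition (UBC) more transparent --- it is exactly what forces the module norm to be equivalent to $\|\cdot\|_\infty$ --- and your approximation step for showing $\langle\xi,\xi\rangle\in C_0(E^0)$ for general $\xi\in C_0(E^1)$ (uniform limit of $\langle\xi_n,\xi_n\rangle$ with $\xi_n\in C_c(E^1)$, using the bound $N$ on the number of summands) is actually more careful than the paper's ``finite sum of continuous functions'' claim: since a local homeomorphism with bounded fibres need not be a covering map, the preimage $s^{-1}(V)$ can contain points outside the chosen sets $U_{e_i}$, and it is the vanishing of $\xi$ at infinity together with the uniform bound, not local sections alone, that rescues continuity. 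The only cosmetic blemish is your phrasing that the inner-product norm on $C_c(E^1)$ ``is complete precisely when'' $C_c(E^1)$ is complete in $\|\cdot\|_\infty$ --- neither is complete in general; what you mean, and what your estimate delivers, is that the two completions coincide.
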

\begin{proof}
By \cite[Lemma 1.6]{katsgen}, $C_c(E^1)$ is dense in $X_E$, hence $X_E\subseteq C_0(E^1)$.  For the reverse containment, let $\xi\in C_0(E^1)$.  Since $s:E^1\to E^0$ has uniformly bounded cardinalities on inverse images, there is $M\in\N$
such that $|\{e\in E^1:s(e)=v\}|\leq M$ for every $v\in E^0$.

 To see that the map $v\mapsto \sum_{e\in s^{-1}(v)}|\xi(e)|^2$ is in $C_0(E^0)$, note that for $v\in
\O$ there is a neighborhood $V$ of $v$ and finitely many open sets $U_{e_i}$ such that $s$
restricts to a homeomorphism from $U_{e_i}$ onto $V$. It follows that $v\mapsto \sum_{e\in s^{-1}
(v)}|\xi(e)|^2$ is a finite sum of continuous functions and is therefore in $C_0(E^0)$ and hence $\xi
\in X_E$.
\end{proof}

%------description of the map that will give the isomorphism
\begin{lem}\label{lem:prodsysisom}
	Fix $m\in \N^k$.  For each $f\in C_0(\O)$, the function $\widetilde{f}:\{m\}\times \O\to\C$
defined by
	\[
	\widetilde{f}(m,x)=f(x)
	\]
	is an element of $X_m^\L$.
\end{lem}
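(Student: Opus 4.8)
The plan is to reduce everything to Lemma~\ref{lem:bound}. Recall that $X_m^\L$ is defined as the topological graph correspondence of the topological graph
\[
E_m = (\L^0,\L^m,r|_{\L^m},s|_{\L^m}) = (\O,\{m\}\times\O,r_m,s_m),
\]
so its source map is $s_m(m,x)=\Th_m(x)$. Under the canonical homeomorphism $\pi\colon\{m\}\times\O\to\O$, $(m,x)\mapsto x$, the map $s_m$ is identified with $\Th_m$. The first step is to observe that since the ambient topological dynamical system satisfies condition (UBC), the local homeomorphism $\Th_m$ has uniformly bounded cardinalities on inverse images (as noted immediately after \defnref{def:unifboundinvimage}), and hence so does $s_m$.

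With that in hand, \lemref{lem:bound} applies to $E_m$ and yields $X_m^\L = C_0(\{m\}\times\O)$ as an algebraic $C_0(\O)$-bimodule. It then remains only to check that $\widetilde f \in C_0(\{m\}\times\O)$. But $\widetilde f = f\circ\pi$ with $\pi$ a homeomorphism, so $\widetilde f$ is continuous, and since $f\in C_0(\O)$ and $\pi$ is proper, $\widetilde f$ vanishes at infinity; thus $\widetilde f\in C_0(\{m\}\times\O)=X_m^\L$, as desired.

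The argument is short because \lemref{lem:bound} carries the analytic content; the only point requiring a moment's care is making the identification of the source map $s_m$ with $\Th_m$ explicit so that condition (UBC) transfers to $E_m$ and \lemref{lem:bound} is genuinely applicable. I do not anticipate a real obstacle beyond that bookkeeping.
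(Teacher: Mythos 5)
Your proof is correct and follows essentially the same route as the paper's: both reduce to \lemref{lem:bound} to identify $X_m^\L$ with $C_0(\{m\}\times\O)$ as a bimodule and then verify that $\widetilde{f}$ lies in $C_0(\{m\}\times\O)$ via the homeomorphism $(m,x)\mapsto x$. Your explicit check that condition (UBC) transfers to the source map $s_m$ so that \lemref{lem:bound} genuinely applies is a detail the paper leaves implicit, but it is not a different argument.
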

\begin{proof}
Since $X_m^\L=X_{E^\L_m}$, by Lemma~\ref{lem:bound} it is sufficient to show that $\widetilde{f}
\in C_0(E_m^1)$ where $E_m^1=\{m\}\times \O$.  Note that $\widetilde{f}$ is the composition of $f$
with the homeomorphism $(m,x)\mapsto x$ of $\{m\}\times\O$ onto $\O$.  Then $\widetilde{f}$ is
continuous since $f$ is. If $\epsilon>0$ and $K$ is a compact set such that $|f(x)|\leq \epsilon$ for
$x\in \O\setminus K$, then $|\widetilde{f}(m,x)|<\epsilon$ for $(m,x)\in E_m^1 \setminus (\{m\}\times
K)$.  Hence $\widetilde{f}\in X^\L_m$ as desired.
\color{black}
\end{proof}
 By a similar argument, we see that for each $\xi\in X^\L_m$, the function $\hat{\xi}=(m,\eta)$
where
\[
\eta(x)=\xi(m,x)
\]
is an element of $X^{Lar}_m$.

We define $\psi:X^{Lar}\to X^\L$ by letting $\psi_m:X^{Lar}_m\to X^\L_m$ be given by
\[
\psi_m(m,f)=\widetilde{f}
\]
for each $m\in\N^k$.

%--------result that the product systems are isomorphic
\begin{proof}[Proof of Theorem~\ref{prop:prodsysisom}]
	Fix $m\in\N^k$.  That $\psi_m:X^{Lar}_m \to X^\L_m$ is a $C_0(\O)$-cor\-re\-spond\-ence
isomorphism preserving the inner product follows directly from the fact that the map $(m,x)\mapsto
x$ is a homeomorphism of $\{m\}\times\O$ onto $\O$.  To see that $\psi$ respects the semigroup
multiplication, let $(m,f)\in X^{Lar}_m, (n,g)\in X^{Lar}_n, (m+n,x)\in \{m+n\}\times\O$.  Then
	\begin{align*}
		\left(\psi_m(m,f)\psi_n(n,g)\right)(m+n,x) &= \psi_m(m,f)(m,x)\psi_n(n,g)(n,\Th_m(x)) \\
		&= \widetilde{f}(m,x)\widetilde{g}(n,\Th_m(x)) \\
		&= f(x) g(\Th_m(x)) \\
		&= f\a_m(g) (x) \\
		&= \psi_{m+n}(m+n,f\a_m(g))(m+n,x).
	\end{align*}
	Hence $\psi_m(m,f)\psi_n(n,g)=\psi_{n+m}(n+m,f\a_m(g))$ as desired.
\end{proof}

\begin{rem} \label{rem:ca}
We showed in \propref{prop:lambdaca} that the topological $k$-graph $\L=(\L(\O,\{T_i\}_{i=1}^k),d)$ is
compactly aligned.  By \cite[Proposition 5.15]{clsv}, this happens if and only if $X^\L$ is compactly
aligned in the sense of Definition~\ref{def:ca}.  In \cite{willis}, Willis shows that if $k=2$, $\O$ is
compact, and $T_1$ and $T_2$ $*$-commute (in the sense that whenever $T_1(x)=T_2(y)$, there
is a unique $z\in\O$ with $T_1(z)=y$ and $T_2(z)=x$) then the product system $X^{Lar}$
constructed from the Exel-Larsen system $(C(\O),\N^2,\a,L)$ is compactly aligned.  \thmref
{prop:prodsysisom} together with \propref{prop:lambdaca} then imply that Willis' result is true for
arbitrary $k\in\N$, locally compact $\O$ and that the $*$-commuting restriction may be lifted.
\end{rem}

%--------section proving that the associated C*-algebras are isomorphic
\section{The Larsen crossed product and $C^*(\L)$}\label{sec:algebras}

In this section, we show that the $C^*$-algebras $C^*(\L)$ and $C_0(\O)\rtimes_{\a,L}\N^k$
associated to the topological $k$-graph $\L=(\L(\O,\{T_i\}_{i=1}^k),d)$ and the Exel-Larsen system $(C_0(\O),
\N^k,\a,L)$, respectively, are isomorphic.  In particular, since $C^*(\L)\cong \NO_{X^\L}$, we show
that $C_0(\O)\rtimes_{\a,L}\N^k\cong \NO_{X^\L}$ and hence that
\[
C^*(\L) \cong C_0(\O)\rtimes_{\a,L}\N^k.
\]

%------remark relating our work to Yeend's example
\begin{rem}
	In \cite[Example 7.1(iii)]{yeend07}, Yeend describes the associated topological $k$-graph
$C^*$-algebra in the case where the maps $\{T_i\}^k_{i=1}$ are homeomorphisms.  Surjectivity of the maps ensures that the associated topological $k$-graph $\L$ has no sources.  As a result, the boundary path groupoid is amenable.  Since the maps are homeomorphisms, there is an induced action $\a$ of $\Z^k$ on $C_0(\O)$ defined by
	\[
	\a_m(f)(x)=f(\Th_m(x)),
	\]
	with universal crossed product $(C_0(\O)\rtimes_\a \Z^k, j_{C_0(\O)},j_{\Z^k})$.  Yeend
asserts that the topological $k$-graph $C^*$-algebra is isomorphic to this crossed product.  The
main result in this section, Theorem~\ref{isomthm}, generalizes this to the setting where the maps
are local homeomorphisms that are not necessarily surjective.
\end{rem}

%-----result that CP-K is equivalent to coisometric on K
\begin{prop}\label{coisomIScp}
Let $(\O,\{T_i\}_{i=1}^k)$ be a topological dynamical system {that satisfies condition (UBC)} and let $(C_0(\O),\N^k,\a,L)$ be the
Exel-Larsen system described in Subsection~\ref{subsec:exlar}.  Let $X^{Lar}$ be
the product system over $\N^k$ of $C_0(\O)$-cor\-re\-spond\-en\-ces from Definition~\ref{def:XLar}.
Let $K=\{K_m\}_{m\in\N^k}$ be the family of ideals defined by \eqref{eqn:coisomsets}.  Let $\psi:X^{Lar}
\to B$ be a (Toeplitz) representation of $X^{Lar}$ in a $C^*$-algebra $B$.  Then $\psi$ is Cuntz-Pimsner
covariant in the sense of \eqref{eqn:CP-K} if and only if it is coisometric on $K$.
\end{prop}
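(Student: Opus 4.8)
The plan is to observe that each of the two covariance conditions is imposed fibrewise over $\N^k$, and that on each fibre they demand exactly the same equation on exactly the same subset of $C_0(\O)$. Indeed, $\psi$ satisfies \eqref{eqn:CP-K} if and only if for every $m\in\N^k$ one has $\psi^{(m)}(\phi_m(a))=\psi_e(a)$ for all $a\in\phi_m^{-1}(\KK(X^{Lar}_m))\cap(\ker\phi_m)^\perp$, while $\psi$ is coisometric on $K$ if and only if for every $m\in\N^k$ the same equation holds for all $a\in K_m=\overline{C_0(\O)\a_m(C_0(\O))C_0(\O)}\cap\phi_m^{-1}(\KK(X^{Lar}_m))$, by \eqref{eqn:coisomsets}. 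So it suffices to show that for each $m$,
\[
\phi_m^{-1}(\KK(X^{Lar}_m))\cap(\ker\phi_m)^\perp=\overline{C_0(\O)\a_m(C_0(\O))C_0(\O)}\cap\phi_m^{-1}(\KK(X^{Lar}_m)),
\]
and I will do this by proving that both $(\ker\phi_m)^\perp$ and $\overline{C_0(\O)\a_m(C_0(\O))C_0(\O)}$ are all of $C_0(\O)$.

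First I would check that $\phi_m$ is injective. Reading the left action off \defnref{def:XLar}, for $f,g\in C_0(\O)$ the left action on $X^{Lar}_m=\{m\}\times C_0(\O)$ is $\phi_m(f)(m,g)=(m,fg)$; if $\phi_m(f)=0$ then $fg=0$ for every $g\in C_0(\O)$, whence $f=0$. Thus $\ker\phi_m=\{0\}$ and $(\ker\phi_m)^\perp=C_0(\O)$. (Alternatively, one may transport the statement across the isomorphism of \thmref{prop:prodsysisom} to the topological graph correspondence $X^\L_m=X_{E_m}$ of $E_m=(\O,\{m\}\times\O,r_m,s_m)$ and use that the range map $r_m$ is surjective, so the left action is injective.)

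Next I would record that $\overline{C_0(\O)\a_m(C_0(\O))C_0(\O)}=C_0(\O)$. In the proof of \lemref{lem:FvsK} it was shown that each $\a_m$ is nondegenerate, i.e.\ $\a_m(C_0(\O))C_0(\O)$ is dense in $C_0(\O)$. Since
\[
\a_m(C_0(\O))C_0(\O)\subseteq C_0(\O)\a_m(C_0(\O))C_0(\O)\subseteq C_0(\O),
\]
passing to closures forces $\overline{C_0(\O)\a_m(C_0(\O))C_0(\O)}=C_0(\O)$.

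Putting the two facts together, for each $m\in\N^k$ we obtain
\[
\phi_m^{-1}(\KK(X^{Lar}_m))\cap(\ker\phi_m)^\perp=\phi_m^{-1}(\KK(X^{Lar}_m))=K_m,
\]
so the defining requirement \eqref{eqn:CP-K} for the pair $(\psi_m,\psi_e)$ is \emph{verbatim} the requirement \eqref{eqn:coisom} that $(\psi_m,\psi_e)$ be coisometric on $K_m$; ranging over all $m\in\N^k$ yields the asserted equivalence in both directions. There is essentially no obstacle to overcome: the only two points that require any argument are the injectivity of $\phi_m$ and the identity $\overline{C_0(\O)\a_m(C_0(\O))C_0(\O)}=C_0(\O)$, and both are immediate consequences of \defnref{def:XLar}, \thmref{prop:prodsysisom}, and the nondegeneracy of $\a_m$ already established within \lemref{lem:FvsK}.
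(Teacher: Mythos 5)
Your proposal is correct and follows essentially the same route as the paper's own proof: both reduce the equivalence to the identity $K_m=\phi_m^{-1}(\KK(X^{Lar}_m))=\phi_m^{-1}(\KK(X^{Lar}_m))\cap(\ker\phi_m)^{\perp}$, established via the nondegeneracy of $\a_m$ from the proof of Lemma~\ref{lem:FvsK} and the injectivity of $\phi_m$ (the left action being multiplication). Your write-up is merely a little more explicit about the containments used to identify $\overline{C_0(\O)\a_m(C_0(\O))C_0(\O)}$ with $C_0(\O)$.
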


\begin{proof}%[Proof of \propref{coisomIScp}]
In the proof of Lemma~\ref{lem:FvsK}, $\a_m(C_0(\O))C_0(\O)=C_0(\O)$ hence
\begin{align*}
	K_m &= \overline{C_0(\O)\a_m(C_0(\O))C_0(\O)}\cap\phi_m^{-1}(\KK(X^{Lar}_m)) \\
	&= C_0(\O)\cap\phi_m^{-1}(\KK(X^{Lar}_m)) \\
	&= \phi_m^{-1}(\KK(X^{Lar}_m)).
\end{align*}
Recall from the construction of $X^{Lar}$ in Definition~\ref{def:XLar} that the left action on each $X^{Lar}_m$ is
given by multiplication. Thus $\phi_m$ is injective so that $(\ker\phi_m)^\perp=C_0(\O)$ and hence
coisometric on $K$ is equivalent to \eqref{eqn:CP-K}.\end{proof}

%------result that the Larsen crossed product is isomorphic to O_X
\begin{cor}\label{cor:lar}
Let $(\O,\{T_i\}_{i=1}^k)$ be a topological dynamical system {satisfying condition (UBC)} and let $(C_0(\O),\N^k,\a,L)$ be the
Exel-Larsen system described in Subsection~\ref{subsec:exlar}.  Let $X^{Lar}$ be the product
system over $\N^k$ of $C_0(\O)$-\csps~from Definition~\ref{def:XLar}.  Then
\[
C_0(\O)\rtimes_{\a,L}\N^k \cong \OO_{X^{Lar}}.
\]
\end{cor}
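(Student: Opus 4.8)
The plan is to deduce the corollary from \propref{coisomIScp} by a universal-property argument, since the statement amounts to saying that two universal $C^*$-algebras attached to $X^{Lar}$ coincide. Recall that, by definition (see \defnref{def:relCPalg} and \eqref{eqn:coisomsets}), the Larsen crossed product $C_0(\O)\rtimes_{\a,L}\N^k$ is the relative Cuntz-Pimsner algebra $\OO(X^{Lar},K)$ with $K=\{K_m\}_{m\in\N^k}$, $K_m=\overline{C_0(\O)\a_m(C_0(\O))C_0(\O)}\cap\phi_m^{-1}(\KK(X^{Lar}_m))$, and that it carries a representation $j^{Lar}:X^{Lar}\to C_0(\O)\rtimes_{\a,L}\N^k$ which is coisometric on $K$ and is universal with this property. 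Meanwhile $\OO_{X^{Lar}}$ (\defnref{def:OX}) carries a representation $j^{Fow}:X^{Lar}\to\OO_{X^{Lar}}$ satisfying \eqref{eqn:CP-K} and universal with that property.

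First I would invoke \propref{coisomIScp}: a Toeplitz representation of $X^{Lar}$ is coisometric on $K$ if and only if it satisfies \eqref{eqn:CP-K}. Applying this to $j^{Fow}$ shows that $j^{Fow}$ is coisometric on $K$, so the universal property of $\OO(X^{Lar},K)$ produces a homomorphism $\Phi:C_0(\O)\rtimes_{\a,L}\N^k\to\OO_{X^{Lar}}$ with $\Phi\circ j^{Lar}=j^{Fow}$. Applying the proposition to $j^{Lar}$ shows that $j^{Lar}$ satisfies \eqref{eqn:CP-K}, so the universal property of $\OO_{X^{Lar}}$ produces a homomorphism $\Psi:\OO_{X^{Lar}}\to C_0(\O)\rtimes_{\a,L}\N^k$ with $\Psi\circ j^{Fow}=j^{Lar}$.

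Finally I would note that $\Psi\circ\Phi$ fixes $j^{Lar}(X^{Lar})$, which generates $C_0(\O)\rtimes_{\a,L}\N^k$, and that $\Phi\circ\Psi$ fixes $j^{Fow}(X^{Lar})$, which generates $\OO_{X^{Lar}}$; hence $\Phi$ and $\Psi$ are mutually inverse isomorphisms and $C_0(\O)\rtimes_{\a,L}\N^k\cong\OO_{X^{Lar}}$.

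I expect no real obstacle beyond what has already been done: the substantive step is \propref{coisomIScp}, whose proof relies on the nondegeneracy identity $\overline{C_0(\O)\a_m(C_0(\O))C_0(\O)}=C_0(\O)$ established inside \lemref{lem:FvsK}, together with the injectivity of each left action $\phi_m$ (so that $(\ker\phi_m)^\perp=C_0(\O)$ and the Katsura ideal is all of $\phi_m^{-1}(\KK(X^{Lar}_m))$). The only point requiring care is to invoke the two universal properties in the correct directions, as above; both universal algebras are guaranteed to exist because \defnref{def:OX} and \defnref{def:relCPalg} apply to an arbitrary product system.
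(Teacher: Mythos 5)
Your proposal is correct and follows essentially the same route as the paper: apply Proposition~\ref{coisomIScp} to show that $j^{Fow}$ is coisometric on $K$ and that $j^{Lar}$ satisfies \eqref{eqn:CP-K}, then use the two universal properties to obtain homomorphisms in both directions which are mutually inverse because they intertwine the generating representations. No discrepancies with the paper's argument.
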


\begin{proof}
Since a representation $\psi:X\to B$ is coisometric on $K$, where $K=\{K_m\}_{m\in \N^k}$ is the
family of ideals defined by \eqref{eqn:coisomsets}, if and only if it is Cuntz-Pimsner covariant in the
sense of \eqref{eqn:CP-K}, it follows that $j^{Fow}:X^{Lar}\to\OO_{X^{Lar}}$ is coisometric on $K$
and $j^{Lar}$ satisfies \eqref{eqn:CP-K}.  It follows from the universal properties of $\OO_{X^{Lar}}$
and the Larsen crossed product $C_0(\O)\rtimes_{\a,L}\N^k$ that there are unique surjective
homomorphisms
\begin{align*}
	\Pi_{j^{Lar}}&:\OO_{X^{Lar}}\to C_0(\O)\rtimes_{\a,L}\N^k \\
	\Pi_{j^{Fow}}&:C_0(\O)\rtimes_{\a,L}\N^k \to \OO_{X^{Lar}}
\end{align*}
such that $j^{Lar}=\Pi_{j^{Lar}}\circ j^{Fow}$ and $j^{Fow}=\Pi_{j^{Fow}}\circ j^{Lar}$.  Since $\OO_
{X^{Lar}}$ and $C_0(\O)\rtimes_{\a,L}\N^k$ are generated by $j^{Fow}$ and $J^{Lar}$
respectively, it follows that $\Pi_{j^{Lar}}$ and $\Pi_{j^{Fow}}$ take generators to generators and
hence
\[
C_0(\O)\rtimes_{\a,L}\N^k\cong\OO_{X^{Lar}}.
\]
\end{proof}

We now show that for any representation $\psi:X^\L\to B$, Cuntz-Pimsner covariance in the sense
of \eqref{eqn:CP-K} is equivalent to CNP-covariance.  In order for CNP-covariance to make sense
for a representation of $X^\L$, we must have that $X^\L$ is compactly aligned.  Recall that $X^\L$
is the topological $k$-graph cor\-re\-spond\-ence associated to the topological $k$-graph $\L=(\L
(\O,\Th),d)$ which we showed in Proposition \ref{prop:lambdaca} is compactly aligned.  By \cite
[Proposition 5.15]{clsv}, since $\L$ is compactly aligned, so is $X^\L$.

%------result that CP-K is equivalent to CNP-covariance
\begin{prop}\label{prop:CPisCNP}
Let $(\O,\{T_i\}_{i=1}^k)$ be a topological dynamical system and let $\L$ be the associated
topological $k$-graph.  Let $X^\L$ be the topological $k$-graph \csp.  Let $\psi:X^\Lambda\to B$ be a (Toeplitz) representation of $X^\Lambda$ in a $C^*$-algebra $B$.  Then $\psi$ is Cuntz-Pimsner covariant in the sense of \eqref{eqn:CP-K} if and only if it is CNP-covariant.
\end{prop}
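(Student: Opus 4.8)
The plan is to show that the two notions of covariance in question, \eqref{eqn:CP-K} and CNP-covariance, reduce to the \emph{same} equations for the topological $k$-graph correspondence $X^\L$, by exploiting two special features of $\L=(\L(\O,\{T_i\}_{i=1}^k),d)$: first, that $\L$ has \emph{no sources} (every vertex receives a path of every degree, since $r|_{\L^m}$ is onto $\O$ because $(m,x)\mapsto x$ is surjective), so the left action $\phi_m:C_0(\O)\to\LL(X^\L_m)$ is injective for every $m$; and second, that $\L$ is compactly aligned, so the Sims--Yeend machinery applies. When $\phi_m$ is injective for each $m$, one has $I_p=\bigcap_{e<r\le p}\ker\phi_r=0$ for $p\neq e$, hence $\widetilde X_q=\bigoplus_{p\le q}X_p\cdot I_{p^{-1}q}$ collapses: the only nonzero summand is $X_q\cdot I_e=X_q$ (the $p=q$ term), so $\widetilde X_q\cong X_q$ and $\widetilde\iota_p^q=\iota_p^q$ under this identification. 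This is the structural observation that makes everything else routine.

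First I would record the consequence of no sources: $r|_{\L^m}:\L^m\to\L^0$ is surjective for each $m\in\N^k$, so for the topological graph $E_m=(\O,\{m\}\times\O,r_m,s_m)$ the left action of $C_0(\O)$ on $X^\L_m$ is by a faithful homomorphism $\phi_m$ (a function $f$ acting as zero on $X^\L_m=C_0(\{m\}\times\O)$ forces $f\circ r_m\equiv0$, hence $f\equiv0$). Thus $(\ker\phi_m)^\perp=C_0(\O)$ and $I_p=\{0\}$ for every $p\neq e$, giving $\widetilde\phi_q$ injective (so the hypothesis for \eqref{eqn:CP-SY} is met) and $\widetilde X_q=X_q$ as above.

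Next I would compare the two covariance conditions directly. For \eqref{eqn:CP-SY}: given a finite $F\subset\N^k$ and $\{T_p\in\KK(X^\L_p):p\in F\}$ with $\sum_{p\in F}\widetilde\iota_p^s(T_p)=0$ for large $s$, I use $\widetilde X_s=X_s$ to rewrite this as $\sum_{p\in F}\iota_p^s(T_p)=0$ in $\LL(X^\L_s)$ for all sufficiently large $s$; because each $\iota_p^s$ is injective when $\phi$ has no kernel (the map $\iota_p^{s}(T)(\xi\eta)=T(\xi)\eta$ is injective since $X_p\cdot X_{s-p}$ generates $X_s$ and, by no sources, $X_{s-p}$ is ``full enough''), taking $s$ large and peeling off fibres forces the relation to propagate down, and combined with the Nica-covariance relation \eqref{eqn:N} one shows $\sum_{p\in F}\psi^{(p)}(T_p)=0$ follows from \eqref{eqn:CP-K} applied fibrewise. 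Conversely, \eqref{eqn:CP-K} for the single fibre $X^\L_m$ is precisely the instance $F=\{m,e\}$, $T_m=\phi_m(a)$, $T_e=-a$ of \eqref{eqn:CP-SY} once one checks $\widetilde\iota_m^s(\phi_m(a))=\widetilde\iota_e^s(a)$ for large $s$ (both equal multiplication by $a$ on the fibre $X_s$, using $\iota_m^s\circ\phi_m=\phi_s$). Since by Remark after Definition~\ref{def:NOX} CNP-covariance means \eqref{eqn:CP-SY} together with \eqref{eqn:N}, and \eqref{eqn:N} is automatic here (any representation satisfying \eqref{eqn:CP-K} on a product system with injective left actions over $\N^k$, which is a directed set, is Nica covariant — this is where compact alignment of $X^\L$ is used, and can be cited from \cite{sy} or \cite{clsv}), the two conditions coincide.

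The main obstacle I expect is the careful bookkeeping in the forward direction: verifying that the ``for large $s$'' hypothesis $\sum_{p\in F}\iota_p^s(T_p)=0$ genuinely forces $\sum_{p\in F}\psi^{(p)}(T_p)=0$ once \eqref{eqn:CP-K} holds on each fibre. The clean way is to avoid re-deriving this from scratch: since $X^\L$ is compactly aligned and has injective (indeed injective on \emph{every} fibre, not merely $\widetilde\phi_q$) left actions, \cite[Proposition 5.1]{sy} or the discussion in \cite{clsv} identifies $\NO_{X^\L}$ with $\OO_{X^\L}$ in exactly this situation, i.e.\ CNP-covariance is equivalent to \eqref{eqn:CP-K}; so I would reduce the proposition to that citation after establishing the two structural facts (no sources $\Rightarrow$ each $\phi_m$ injective $\Rightarrow$ $\widetilde X_q=X_q$ and $I_p=0$). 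Everything else — injectivity of $\phi_m$, triviality of the $I_p$, and the resulting collapse of the Sims--Yeend construction — is elementary from the explicit description of $E_m$ in Definition~\ref{def:topkgraphcorresp}.
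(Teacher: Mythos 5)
Your structural observations overlap with the paper's proof up to a point: the left action $\phi_m$ on each fibre $X^\L_m$ is indeed injective (it is multiplication by $f\circ r_m$ with $r_m$ surjective), $X^\L$ is compactly aligned, and the paper, like you, ultimately reduces the proposition to a citation of the Sims--Yeend machinery (namely \cite[Corollary 5.2]{sy}, not Proposition 5.1). But there is a genuine gap: that corollary requires not only compact alignment and injectivity of each $\phi_m$, but also that the left action on each fibre be \emph{by compact operators}, i.e.\ $\phi_m(C_0(\O))\subseteq\KK(X^\L_m)$ for every $m$. Your proposal never verifies this, and your parenthetical claim that Nica covariance \eqref{eqn:N} is automatic for a representation satisfying \eqref{eqn:CP-K} ``where compact alignment of $X^\L$ is used'' misattributes the mechanism: compact alignment alone does not make \eqref{eqn:N} automatic for incomparable $p,q$; it is the compactness of the left actions that does, and this is a standing hypothesis of the result you want to cite. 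Without $\phi_m(C_0(\O))\subseteq\KK(X^\L_m)$, condition \eqref{eqn:CP-K} only constrains $\psi^{(m)}\circ\phi_m$ on the ideal $\phi_m^{-1}(\KK(X^\L_m))$, and the asserted equivalence with CNP-covariance does not follow from injectivity and compact alignment alone.

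The paper closes exactly this gap with a separate lemma (\lemref{ComOp}): by \cite[Proposition 1.24]{katsgen}, $\phi_m^{-1}(\KK(X^\L_m))=C_0(\O_{fin})$ where $\O_{fin}$ consists of those $v$ admitting a neighborhood $V$ with $r_m^{-1}(V)$ compact; since $r_m^{-1}(V)=\{m\}\times V$ and $\O$ is locally compact, $\O_{fin}=\O$. This is a nontrivial feature of the particular graphs $E_m=(\O,\{m\}\times\O,r_m,s_m)$ --- for a general topological graph the left action need not be by compacts --- so it cannot be waved through. Once you add this verification, your argument collapses onto the paper's. Your auxiliary computations (that $I_p=0$ for $p\neq e$, that $\widetilde{X}_q\cong X_q$, and that the $F=\{e,m\}$ instance of \eqref{eqn:CP-SY} recovers \eqref{eqn:CP-K}) are correct but are subsumed by the citation once its hypotheses are actually in place; the vaguer middle portion of your argument (``peeling off fibres'') would not stand on its own as written.
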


We would like to apply \cite[Corollary 5.2]{sy} to obtain the desired result.  In order to do so, we
need to establish that the left action on each fibre is by compact operators.

%------result that the left action is by compact operators
\begin{lem} \label{ComOp}
Let $(\O, \{T_i\}^k_{i=1})$ be a topological dynamical system and $\L$ be the associated
topological $k$-graph.  Then the left action of $C_0(\O)$ on each fibre $X^\L_m$ of the topological
$k$-graph \csp~is by compact operators.
\end{lem}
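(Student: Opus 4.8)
The plan is to exhibit, for each $m \in \N^k$ and each $f \in C_0(\O)$, an explicit approximation of the left-multiplication operator $\phi_m(f) \in \LL(X^\L_m)$ by finite sums of rank-one operators $\Th_{\xi,\eta}$, using the local-homeomorphism structure of $s|_{\L^m} = s_m$. Recall $\L^m = \{m\} \times \O$ and $s_m(m,x) = \Th_m(x)$, and that under condition (UBC)---or, more to the point here, simply because $\Th_m$ is a local homeomorphism---every point $x \in \O$ has a neighbourhood $V$ over which $\Th_m$ admits only finitely many continuous local sections. The first step is to reduce to $f \in C_c(\O)$, since $C_c(\O)$ is dense in $C_0(\O)$, $\phi_m$ is a homomorphism (hence norm-decreasing), and $\KK(X^\L_m)$ is norm-closed: if $\phi_m(f) \in \KK(X^\L_m)$ for all $f \in C_c(\O)$, the same follows for all $f \in C_0(\O)$.

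Next, fix $f \in C_c(\O)$ with $K := \supp f$. For each $x \in K$ choose an open neighbourhood $V_x$ of $\Th_m(x)$ and an open $U_x \ni x$ such that $\Th_m|_{U_x} : U_x \to V_x$ is a homeomorphism; shrinking, we may take the $U_x$ to have compact closure. By compactness of $K$, finitely many $U_1, \dots, U_n$ cover $K$. I would then take a partition of unity: functions $\rho_1, \dots, \rho_n \in C_c(\O)$ with $\supp \rho_j \subset U_j$, $\rho_j \geq 0$, and $\sum_j \rho_j = 1$ on $K$, so that $f = \sum_j \rho_j f$. It therefore suffices to show each $\phi_m(\rho_j f)$ is compact, i.e.\ we may assume $f$ is supported in a single set $U$ on which $\Th_m$ restricts to a homeomorphism onto an open $V$. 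In that situation I claim $\phi_m(f) = \Th_{\xi, \eta}$ for suitable $\xi, \eta \in C_c(\{m\} \times \O) \subset X^\L_m$: writing $g := \sqrt{|f|}$ (continuous, compactly supported in $U$) and choosing $h \in C_c(V)$ with $h \equiv 1$ on $\Th_m(\supp g)$, one computes using the formulas for the bimodule action and inner product in \defnref{def:topkgraphcorresp} that, for $\zeta \in X^\L_m$,
\[
\Th_{\xi,\eta}(\zeta)(m,x) = \xi(m,x) \sum_{(m,y) \in s_m^{-1}(x')} \overline{\eta(m,y)}\, \zeta(m,y),
\]
where the sum over the fibre $s_m^{-1}(s_m(m,x))$ collapses to the single term $y = x$ when $\xi, \eta$ are chosen supported in $\{m\} \times U$ (since $\Th_m$ is injective there), yielding $\xi(m,x)\overline{\eta(m,x)}\zeta(m,x)$; picking $\xi(m,x) = \sqrt{|f(x)|}\,\mathrm{sgn}(f(x))\cdot(\text{something})$ and $\eta(m,x) = \sqrt{|f(x)|}$ and absorbing the cutoff $h \circ \Th_m$ gives exactly $(\rho_j f)(x)\zeta(m,x) = \phi_m(\rho_j f)(\zeta)(m,x)$. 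Thus $\phi_m(\rho_j f) \in \KK(X^\L_m)$, and summing over $j$ gives $\phi_m(f) \in \KK(X^\L_m)$.

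The main obstacle I anticipate is handling the bookkeeping in the rank-one computation carefully: the inner product $\<\eta,\zeta\>_m(x') = \sum_{(m,y)\in s_m^{-1}(x')} \overline{\eta(m,y)}\zeta(m,y)$ sums over the \emph{entire} fibre of $s_m$ over $x' = \Th_m(x)$, which has up to $N_m$ points, not just the point $x$ itself. The resolution---and this is the crux---is that when $\xi$ is supported in $\{m\} \times U$ with $\Th_m|_U$ injective, the product $\xi(m,x)\<\eta,\zeta\>_m(\Th_m(x))$ vanishes unless $x \in U$, and then the only $y \in U$ with $\Th_m(y) = \Th_m(x)$ is $y = x$; so provided $\eta$ is \emph{also} supported in $\{m\}\times U$, the fibre sum reduces to its $y=x$ term and $\Th_{\xi,\eta}$ acts as multiplication by $\xi(m,\cdot)\overline{\eta(m,\cdot)}$, a function supported in $U$. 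This is precisely why the partition-of-unity reduction to a single injectivity neighbourhood is needed before attempting the rank-one identification. With that reduction in hand the computation is routine, and the finitely-many-sections fact (guaranteed by (UBC), though local finiteness alone suffices) ensures everything stays within $C_0$ and that finite sums suffice. An alternative, if one prefers to avoid the square-root manipulation, is to cite that for a topological graph $E$ with $r$ proper the left action is by compacts (this is in Katsura's work, \cite{katsgen}), since $\L$ proper (\lemref{lem:lambdaisproper}) means each $E_m$ has $r_m$ proper; but the direct argument above is self-contained and I would favour presenting it.
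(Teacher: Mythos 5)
Your argument is correct, but it takes a genuinely different (and more self-contained) route than the paper. The paper's proof is essentially a two-line citation: it invokes Katsura's Proposition 1.24 of \cite{katsgen}, which identifies $\phi_m^{-1}(\KK(X^\L_m))$ with $C_0(\O_{fin})$ where $\O_{fin}$ is the set of points admitting a neighbourhood $V$ with $r_m^{-1}(V)$ compact, and then observes that since $r_m$ is just the projection $(m,x)\mapsto x$, one has $r_m^{-1}(V)=\{m\}\times V$, so local compactness of $\O$ gives $\O_{fin}=\O$. You instead re-derive the relevant direction of Katsura's result by hand: reduce to $f\in C_c(\O)$ by density and closedness of $\KK(X^\L_m)$, use a partition of unity subordinate to a finite cover of $\supp f$ by injectivity neighbourhoods of $\Th_m$, and exhibit each piece $\phi_m(\rho_j f)$ as a rank-one operator $\Th_{\xi,\eta}$ with $\xi,\eta$ supported in a single sheet, where the fibre sum in $\<\eta,\zeta\>_m$ collapses to the diagonal term. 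Your identification of the crux --- that both $\xi$ and $\eta$ must live over one injectivity neighbourhood so that $\Th_{\xi,\eta}$ becomes multiplication by $\xi\overline{\eta}=f$ --- is exactly right, and the factorization $f=(f/\sqrt{|f|})\cdot\sqrt{|f|}$ is continuous, so the construction goes through (the cutoff $h$ is not actually needed). Note also that, as you observe, neither (UBC) nor any bound on the number of sheets is required; the paper's lemma likewise makes no such hypothesis. What the paper's approach buys is brevity and a cleaner conceptual statement (the range map of $E_m$ is proper, full stop); what yours buys is independence from Katsura's structural result and an explicit picture of the compact approximants. You even flag the citation route as an alternative at the end, so the two proofs are consciously interchangeable here.
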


\begin{proof}
It follows from \cite[Proposition 1.24]{katsgen}, that $\phi_m^{-1}(\KK(X_m))=C_0(\O_{fin})$
where
\[
\O_{fin}=\{v\in\O: v\textrm{ has a neighborhood $V$ such that } r_m^{-1}(V) \textrm{ is compact}\}.
\]
Since $\O$ is a locally compact space, every point $v\in\O$ has a compact neighborhood $V$.  The
range map is given by projection onto $\O$ so that $r_m\inv(V)=\{m\}\times V$, which is compact.  It
follows that $\O_{fin}=\O$.
\end{proof}

\begin{proof}[Proof of \propref{prop:CPisCNP}]
Recall that $(\Z^k,\N^k)$ is a quasi-lattice ordered group such that each pair $s,t\in\N^k$ has a
least upper bound and that $X^\L$ is compactly aligned.  It follows from the construction of $X^\L$
in Definition~\ref{def:topkgraphcorresp} that the left action on each fibre is given by multiplication
and is therefore injective.  By \lemref{ComOp}, the left action on each fibre is by compact operators.
Then by \cite[Corollary 5.2]{sy}, $\psi$ is CNP-covariant if and only if
\[
\psi^{(m)}\circ\phi_m =\psi_0 \all m\in\N^k.
\]
It follows that $\psi$ is CNP-covariant if and only if $\psi$ satisfies \eqref{eqn:CP-K}.
\end{proof}

%------result that NO_X is isomorphic to O_X
\begin{cor}\label{cor:lambda}
Let $(\O,\{T_i\}_{i=1}^k)$ be a topological dynamical system and let $\L$ be the topological $k$-graph described in Subsection~\ref{subsec:lambda}.  Let $X^\L$ be the associated topological $k$-graph \csp~as in Definition~\ref{def:topkgraphcorresp}.  Then
\[
\NO_{X^\L} \cong \OO_{X^\L}.
\]
\end{cor}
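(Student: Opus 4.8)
The plan is to obtain this as a formal consequence of \propref{prop:CPisCNP} together with the universal properties in \defnref{def:OX} and \defnref{def:NOX}, following the same template as the proof of \corref{cor:lar}. First I would record that $\NO_{X^\L}$ is actually defined: $X^\L$ is compactly aligned by \propref{prop:lambdaca} (equivalently by \remref{rem:ca}), and the maps $\widetilde{\phi}_q$ are injective. Indeed, the left action $\phi_m$ on each fibre $X^\L_m$ is given by pointwise multiplication and is therefore injective (as already observed in the proof of \propref{prop:CPisCNP}); since $X^\L_q = X^\L_q\cdot I_e$ is the $p=q$ summand of $\widetilde{X^\L}_q$ and $\widetilde{\phi}_q(a)$ acts on that summand as $\phi_q(a)$, vanishing of $\widetilde{\phi}_q(a)$ forces $\phi_q(a)=0$, hence $a=0$.

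Next I would invoke \propref{prop:CPisCNP}: a (Toeplitz) representation $\psi:X^\L\to B$ satisfies \eqref{eqn:CP-K} if and only if it is CNP-covariant. Consequently the universal representation $j^{Fow}:X^\L\to\OO_{X^\L}$ of \defnref{def:OX} is CNP-covariant, while the universal CNP-covariant representation $j^{CNP}:X^\L\to\NO_{X^\L}$ of \defnref{def:NOX} satisfies \eqref{eqn:CP-K}. Applying the universal property of $\OO_{X^\L}$ to the (CP-K) representation $j^{CNP}$ yields a homomorphism $\Pi_{j^{CNP}}:\OO_{X^\L}\to\NO_{X^\L}$ with $\Pi_{j^{CNP}}\circ j^{Fow}=j^{CNP}$, and applying the universal property of $\NO_{X^\L}$ to the CNP-covariant representation $j^{Fow}$ yields a homomorphism $\Pi_{j^{Fow}}:\NO_{X^\L}\to\OO_{X^\L}$ with $\Pi_{j^{Fow}}\circ j^{CNP}=j^{Fow}$. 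Since $\OO_{X^\L}$ is generated by the image of $j^{Fow}$ and $\NO_{X^\L}$ is generated by the image of $j^{CNP}$, the compositions $\Pi_{j^{Fow}}\circ\Pi_{j^{CNP}}$ and $\Pi_{j^{CNP}}\circ\Pi_{j^{Fow}}$ fix the respective sets of generators and hence are the identity maps, giving $\OO_{X^\L}\cong\NO_{X^\L}$.

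All of the substantive content has already been carried out in \propref{prop:CPisCNP}, which in turn rests on \lemref{ComOp} and \cite[Corollary 5.2]{sy}; the corollary itself is a routine diagram chase through universal properties, and I do not anticipate any genuine obstacle. The only point that requires a moment's attention is the well-definedness of $\NO_{X^\L}$ addressed in the first paragraph, after which the argument is purely formal.
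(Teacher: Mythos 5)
Your proposal is correct and follows essentially the same route as the paper: both deduce the result from \propref{prop:CPisCNP} by noting that $j^{Fow}$ is CNP-covariant and $j^{CNP}$ satisfies \eqref{eqn:CP-K}, then use the two universal properties to produce mutually inverse surjections that identify $\OO_{X^\L}$ with $\NO_{X^\L}$. Your extra opening paragraph verifying that $\NO_{X^\L}$ is well defined (compact alignment and injectivity of the maps $\widetilde{\phi}_q$) is a reasonable addition that the paper instead disposes of in the remarks preceding \propref{prop:CPisCNP}.
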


\begin{proof}
Since a representation $\psi:X^\Lambda\to B$ is Cuntz-Pimsner covariant in the sense of \eqref{eqn:CP-K} if and only if it is CNP-covariant, it follows that $j^{Fow}:X^\L \to\OO_{X^\L}$ is CNP-covariant and $j^{CNP}$ satisfies \eqref{eqn:CP-K}.  It follows from the universal properties of $\OO_{X^\L}$ and $\NO_{X^\L}$ that there are unique surjective homomorphisms
\begin{align*}
	\Pi_{j^{CNP}}&:\OO_{X^\L}\to \NO_{X^\L} \\
	\Pi_{j^{Fow}}&:\NO_{X^\L} \to \OO_{X^\L}
\end{align*}
such that $j^{CNP}=\Pi_{j^{CNP}}\circ j^{Fow}$ and $j^{Fow}=\Pi_{j^{Fow}}\circ j^{CNP}$.  Since $
\OO_{X^\L}$ and $\NO_{X^\L}$ are generated by $j^{Fow}$ and $j^{CNP}$ respectively, it follows
that $\Pi_{j^{CNP}}$ and $\Pi_{j^{Fow}}$ take generators to generators and hence
\[
\NO_{X^\L}\cong\OO_{X^\L}.
\]
\end{proof}

%------main result that C*(\Lambda) is isomorphic to the Larsen crossed product
\begin{thm}\label{isomthm}
Let $(\O, \{T_i\}^k_{i=1})$ be a topological dynamical system {satisfying condition (UBC)}.  Let $C_0(\O)\rtimes_{\a,L}\N^k$ be the Larsen crossed product and $C^*(\L)$ be the topological $k$-graph $C^*$-algebra associated to $(\O, \{T_i\}^k_{i=1})$. Then
\[
C^*(\L)\cong C_0(\O)\rtimes_{\a,L}\N^k.
\]
\end{thm}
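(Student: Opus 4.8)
The plan is to assemble the isomorphism from the chain of results already established in the excerpt. The target $C^*(\L) \cong C_0(\O)\rtimes_{\a,L}\N^k$ will be obtained by stringing together four isomorphisms, each of which is either proved earlier or follows from the universal properties of the associated algebras. First, I would invoke the result quoted in Subsection~2.3, namely \cite[Theorem 5.20]{clsv}, which gives $C^*(\L) \cong \NO_{X^\L}$ since $\L$ is compactly aligned by \propref{prop:lambdaca}. Next, \corref{cor:lambda} gives $\NO_{X^\L} \cong \OO_{X^\L}$, using \propref{prop:CPisCNP} (Cuntz-Pimsner covariance in the sense of \eqref{eqn:CP-K} coincides with CNP-covariance here because the left action on each fibre is injective and by compact operators, via \lemref{ComOp} and \cite[Corollary 5.2]{sy}).

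Then I would transport across the isomorphism of product systems: \thmref{prop:prodsysisom} gives a product system isomorphism $\psi: X^{Lar}\to X^\L$, and I would check that such an isomorphism induces an isomorphism $\OO_{X^\L}\cong\OO_{X^{Lar}}$ of the corresponding Cuntz-Pimsner algebras. This is the step requiring a small amount of care: one must verify that $\psi$, being fibrewise a $C_0(\O)$-correspondence isomorphism preserving inner products and respecting the multiplication $\beta^{Lar}_{m,n}$, $\beta^\L_{m,n}$, also intertwines the maps $\phi_m$, the identifications $\KK(X_m)$, and hence the condition \eqref{eqn:CP-K}; then a representation of one product system that satisfies \eqref{eqn:CP-K} pulls back (resp.\ pushes forward) along $\psi$ to one of the other that also satisfies \eqref{eqn:CP-K}, and the universal property delivers mutually inverse surjections $\OO_{X^\L}\leftrightarrow\OO_{X^{Lar}}$. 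Finally, \corref{cor:lar} gives $\OO_{X^{Lar}} \cong C_0(\O)\rtimes_{\a,L}\N^k$, because coisometry on the family $K=\{K_m\}$ of \eqref{eqn:coisomsets} is equivalent to \eqref{eqn:CP-K} for representations of $X^{Lar}$ (the content of \propref{coisomIScp}, which uses that each $\a_m$ is nondegenerate so $K_m = \phi_m^{-1}(\KK(X^{Lar}_m))$ and that $\phi_m$ is injective).

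Concatenating, $C^*(\L)\cong\NO_{X^\L}\cong\OO_{X^\L}\cong\OO_{X^{Lar}}\cong C_0(\O)\rtimes_{\a,L}\N^k$, which is the claim. I expect the main obstacle to be the middle step --- making precise that a product-system isomorphism $\psi$ induces an isomorphism of the universal $C^*$-algebras $\OO_{X^\L}$ and $\OO_{X^{Lar}}$. Concretely, one needs to see that if $j^{Fow}:X^{Lar}\to\OO_{X^{Lar}}$ is the universal representation satisfying \eqref{eqn:CP-K}, then $j^{Fow}\circ\psi^{-1}:X^\L\to\OO_{X^{Lar}}$ is a Toeplitz representation satisfying \eqref{eqn:CP-K} (and symmetrically); granting that, the universal properties of $\OO_{X^\L}$ and $\OO_{X^{Lar}}$ produce surjective homomorphisms in both directions that send generators to generators, hence are mutually inverse. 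Verifying the Toeplitz axioms and \eqref{eqn:CP-K} for $j^{Fow}\circ\psi^{-1}$ is routine given that $\psi$ is an isometric bimodule map respecting the semigroup structure, but it is the one place where the argument is not purely a citation. Everything else is a direct appeal to \propref{prop:lambdaca}, \corref{cor:lambda}, \thmref{prop:prodsysisom}, and \corref{cor:lar}, together with \cite[Theorem 5.20]{clsv}.
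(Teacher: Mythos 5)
Your proposal follows exactly the same chain of isomorphisms as the paper's own proof, namely $C^*(\L)\cong\NO_{X^\L}\cong\OO_{X^\L}\cong\OO_{X^{Lar}}\cong C_0(\O)\rtimes_{\a,L}\N^k$, citing the same ingredients (\cite[Theorem 5.20]{clsv}, \corref{cor:lambda}, \thmref{prop:prodsysisom}, and \corref{cor:lar}). The only difference is that you spell out the verification that the product-system isomorphism induces an isomorphism of the Cuntz--Pimsner algebras, a step the paper asserts without elaboration; your treatment of it is correct.
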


\begin{proof}
By \cite[Theorem 5.20]{clsv}, we have that $C^*(\L)\cong\NO_{X^\L}$.  By \thmref
{prop:prodsysisom}, $X^{Lar}\cong X^\Lambda$ and hence the Cuntz-Pimsner algebras $\mathcal{O}_{X^{Lar}}$ and $\mathcal{O}_{X^\Lambda}$ are isomorphic.  Thus,
\[
C^*(\L)\cong\NO_{X^\L}\cong\OO_{X^\L}\cong\OO_{X^{Lar}}\cong C_0(\O)\rtimes_{\a,L}\N^k.
\]
\end{proof}

\begin{ex}
It is known that $C^*(\O_k)\cong\KK(\ell^2(\N^k))$.  The result above, together with our description
of the Exel-Larsen system $(C_0(\N^k),\N^k,\a,L)$ as in Example~\ref{ex:kgraphshift:EL} gives that $\KK
(\ell^2(\N^k))\cong C_0(\N^k)\rtimes_{\a,L}\N^k$.
\end{ex}

\providecommand{\bysame}{\leavevmode\hbox to3em{\hrulefill}\thinspace}
\providecommand{\MR}{\relax\ifhmode\unskip\space\fi MR }
% \MRhref is called by the amsart/book/proc definition of \MR.
\providecommand{\MRhref}[2]{%
  \href{http://www.ams.org/mathscinet-getitem?mr=#1}{#2}
}
\providecommand{\href}[2]{#2}

\end{document}